\numberwithin{equation}{section}
\newcommand{\bN}{\mathbf{N}}
\newcommand{\bP}{\mathbb{P}}
\newcommand{\Q}{\mathbb{Q}}
\newcommand{\E}{\mathbb{E}}
\newcommand{\N}{\mathbb{N}}
\newcommand{\R}{\mathbb{R}}
\newcommand{\eps}{\epsilon}
\newcommand{\cM}{\mathcal{M}}
\newcommand{\cG}{\mathcal{G}}
\newcommand{\cF}{\mathcal{F}}
\newcommand{\cA}{\mathcal{A}}
\newcommand{\cL}{\mathcal{L}}
\newcommand{\cP}{\mathcal{P}}
\newcommand{\cB}{\mathcal{B}}
\def\restrict#1{\raise-.5ex\hbox{\ensuremath|}_{#1}}
\newcommand\restr[2]{{
  \left.\kern-\nulldelimiterspace 
  #1 
  \vphantom{\big|} 
  \right|_{#2} 
  }}
\newtheorem{theorem}{Theorem}[section]
\newtheorem{lemma}[theorem]{Lemma}
\newtheorem{proposition}[theorem]{Proposition}
\begin{document}

\title{Instantaneous support propagation for $\Lambda$-Fleming-Viot processes}
\author{Thomas Hughes and Xiaowen Zhou}
\maketitle

\begin{abstract}
For a probability-measure-valued neutral Fleming-Viot process $Z_t$ with L\'evy mutation and resampling mechanism associated to a general $\Lambda$-coalescent with multiple collisions, we prove the instantaneous propagation of supports. That is, at any fixed time $t>0$, with probability one the closed support $S(Z_t)$ of the Fleming-Viot process satisfies $S(\nu * Z_t) \subseteq S(Z_t)$, where $\nu$ is the L\'evy measure of the mutation process. To show this result, we apply Donnelly-Kurtz's lookdown particle representation for Fleming-Viot processes.
\end{abstract}

\section{Introduction and main results} \label{s_intro}

\subsection{Introduction}
In this work, we study generalized Fleming-Viot processes, also called $\Lambda$-Fleming-Viot processes, a class of probability measure-valued Markov processes which model the evolving distribution of genetic types in a population subject to random reproduction, mutation and genetic (allele) drift. The models considered are selectively neutral, meaning that no genetic type has a reproductive advantage over another. In the context of measure-valued Markov processes, it is natural to view mutation as a spatial motion on the space of genetic types and to study the measure's support properties vis-\`a-vis the mutation/motion. It is such a perspective that we take here, in particular the case in which type space is $\R^d$ and the mutation operator is the generator of a L\'evy process with jumps.

The classical Fleming-Viot model was introduced by Fleming and Viot in \cite{FV1979}. The model and its variants, both with and without mutation, have been studied in depth. For a survey of some classical results, see \cite{EK1993}. One feature of the model is its connection to Kingman's coalescent, an exchangeable integer partition-valued Markov process arising in mathematical population genetics. The coalescent describes the merging of ancestral lines backwards in time of samples from the Fleming-Viot process. The connection was made rigorous in \cite{DK96} via a construction of the Fleming-Viot process explicitly carrying versions of Kingman's coalescent. The more general $\Lambda$-Fleming-Viot processes dual to more general coalescents can be found in Donnelly and Kurtz \cite{DK99} who identified them as a class of measure-valued dual processes to the $\Lambda$-coalescents, which themselves are a class of exchangeable integer partition-valued Markov processes generalizing Kingman's coalescent by allowing multiple collisions. The $\Lambda$-coalescents were introduced independently by Pitman \cite{P1999} and Sagitov \cite{S1999}. The class of $\Lambda$-Fleming-Viot processes allows more general reproduction mechanisms in which each individual can possibly give birth to a large number of children that is comparable to the size of the whole population. Finally, we note that Fleming-Viot-type processes (both classical and $\Lambda$-) arise as the infinite particle limits of individual-based Markov models for the distribution of genetic types in a population of fixed size, such as the Moran process and its variants such as Cannings population models with non-overlapping generations; see Donnelly and Kurtz \cite{DK96, DK99}. We refer to Bertoin and Le Gall \cite{BLG2003,BLG2005,BG2006} for an alternative representation of the mutationless $\Lambda$-Fleming-Viot process via a flow of bridges.

An even more general class of Fleming-Viot processes that is dual to $\Xi$-coalescent involving simultaneous multiple collisions can be found in Birkner  et al. \cite{BBMST}, where the lookdown particle representation is presented, and the
pathwise convergence of the empirical measures of approximating particle systems to the limiting $\Xi$-Fleming-Viot process is also proved.


The support properties of measure-valued Markov processes have been studied extensively. However, the majority of the literature concerns Dawson-Watanabe superprocesses, which, roughly speaking, are the spatial analogues of continuous state branching processes. They are infinitely divisible and their Laplace functionals have a useful representation in terms of solutions to semi-linear partial differential equations. These tools and others (for example the historical process and the Brownian snake) have led to very precise results concerning the supports of these processes, especially for super-Brownian motion. See \cite{P2002} for an introduction to Dawson-Watanabe superprocesses and a survey of some classical results concerning their supports.

The literature on support properties of Fleming-Viot processes is sparser. Compact support and other properties for the classical Fleming-Viot process with Brownian mutation were established in a pioneering work of Dawson and Hochberg \cite{DH82} by introducing an infinite particle system representation. A refinement of this representation, the so-called lookdown particle construction, was proposed in \cite{DK96, DK99} for Fleming-Viot processes and other measure-valued processes. The lookdown representation encodes a genealogy of  the Fleming-Viot process, and  often plays the role of the historical process for superprocesses. Using the lookdown representation, more recent work of Liu and Zhou \cite{LZ2012, LZ2015} has established, among other properties, the compact support property, the one-sided modulus of continuity and Hausdorff dimension of the support process for the $\Lambda$-Fleming-Viot process with Brownian mutation with the associated  $\Lambda$-coalescent coming down from infinity. For a measure-valued process, instantaneous support propagation occurs if the (closed) support of the random measure can reach arbitrarily far away within arbitrarily small time. Instantaneous support propagation for the super-L\'{e}vy process with binary branching and jump type spatial motion was first proved by Perkins \cite{P1990} in 1990. Surprisingly, similar instantaneous support propagation for the closely related Kingman-Fleming-Viot process with jump type mutation remained an unsolved problem. The goal of this paper is to provide an answer to this problem. 
 
Our main result, Theorem~\ref{thm_main}, establishes a version of instantaneous support propagation when the mutation is a L\'evy process with jumps. It states that the support of the measure obtained by convolving the $\Lambda$-Fleming-Viot process with the jump measure of the mutation process is contained in the support of the $\Lambda$-Fleming-Viot process itself. In many cases, this implies that the closed support is $\R^d$. The analogous statement for Dawson-Watanabe superprocesses is known \cite{EP1991, LZ2008, P1990}. We show that it holds for $\Lambda$-Fleming-Viot processes in complete generality: the mutation process can be any L\'evy process and we require no conditions on the re-sampling mechanism/ancestral coalescent.




\subsection{Statement of main result} We now introduce some notation. Let $\cM_f(E)$ and $\cM_1(E)$ denote respectively the spaces of finite measures and probability measures on a Polish space $E$. When $E = \R^d$, we simply write $\cM_f$ and $\cM_1$. Let $\cB = \cB(\R^d)$ denote the space of Borel functions from $\R^d$ into $\R$, and let $\cB^+$, $\cB_b$ and $\cB^+_b$ denote, respectively, the subspaces of non-negative, bounded, and non-negative and bounded Borel functions. For $\mu \in \cM_f$ and $\phi \in \cB$, we will use the notations $\langle \phi, \mu \rangle$ and $\mu(\phi)$ interchangeably (as appropriate) to denote the integral of $\phi$ with respect to $\mu$, that is
\begin{equation}
\langle \phi, \mu \rangle := \int_{\R^d} \phi(x) \mu(dx) =: \mu(\phi). \nonumber
\end{equation}

In order to define the $\Lambda$-Fleming-Viot process and state our main result, we introduce the objects and quantities necessary to describe the re-sampling mechanism and mutation. We begin with the former. Let $\Lambda \in \cM_f([0,1])$. For $2 \leq k \leq n \in \N$, we define the rates $\lambda_{n,k}$ by
\begin{equation}
\lambda_{n,k} = \int_0^1 x^{k-2}(1-x)^{n-k} \Lambda(dx). \nonumber
\end{equation}
These are of course the merger rates for the $\Lambda$-coalescent, which as we have noted encodes the genealogy of the $\Lambda$-Fleming-Viot process. This is described in greater detail in Sections~\ref{s_coalescents} and \ref{s_lookdown}.

Now we introduce the mutation process. Let $(W_t : t \geq 0)$ be a L\'evy process on $\R^d$. We denote its law and expectation by $\bP^W_x$ and $\E^W_x$, respectively, when $W_0 = x$. By the L\'evy-Khintchine formula, for all $t>0$ and all $\xi \in \R^d$,
\begin{equation}
\log \E^W_0(e^{i \langle W_t, \xi\rangle}) = -t \Psi(\xi), \nonumber
\end{equation}
where the characteristic exponent $\Psi$ is given by
\begin{equation}
\Psi(\xi) = i \langle a, \xi \rangle + \frac 1 2 \langle \xi, Q \xi \rangle + \int_{\R^d} \left( 1 - e^{i \langle x, \xi\rangle} + i \langle x, \xi\rangle 1_{\{|x| < 1 \}}\right) \nu(dx). \nonumber
\end{equation}
In the above, $a \in \R^d$, $Q \in \R^{d \times d}$ is a symmetric positive semidefinite matrix, and $\nu$ is a $\sigma$-finite measure on $\R^d$, called the L\'evy measure of $W_t$, satisfying $\nu(\{0\}) = 0$ and $\int_{\R^d} (1 \wedge |x|^2) \nu(dx) < \infty$; see e.g. Bertoin \cite{Bertoin96} for details. The generator of $W_t$ is the integro-differential operator $A$, which satisfies
\begin{equation} \label{eq_Levy_generator}
A \phi (x) = a \cdot \nabla \phi (x)+\frac{1}{2}\nabla \cdot (Q \nabla \phi) (x)  +\int_{\R^d}\left(\phi(x+y)-\phi(x)-\nabla \phi (x) y1_{\{ |y| < 1\}}\right)\nu(dy) \nonumber
\end{equation}
for all $\phi \in C^2_b$, the space of bounded twice differentiable functions with bounded derivatives up to order two; see Schilling \cite{Schilling}.

To define the generator of the $\Lambda$-Fleming-Viot process, we introduce an appropriate class of test functions. For $n \in \N$ and $\phi_1,\dots,\phi_n \in \cB_b$, let $F_{\phi_1,\dots,\phi_n}$ denote the function on $\cM_f$ defined by
\begin{equation}
F_{\phi_1,\dots,\phi_n}(\mu) = \prod_{i=1}^n \langle \phi_i, \mu \rangle. \nonumber
\end{equation}
Let $\cG(\cM_f)$ denote the class containing all such functions for all $n \in \N$ and $\phi_i \in \cB_b$. For $F_{\phi_1,\dots,\phi_n} \in \cG(\cM_f)$, we define
\begin{equation} \label{e_generator}
\cA^{\Lambda,A} F_{\phi_1, \dots, \phi_n} (\mu) := \sum_{i = 1}^n \langle A \phi_i, \mu \rangle \prod_{j \neq i} \langle \phi_i, \mu \rangle \,\,\, + \sum_{J \subset [n] : \#J \geq 2} \lambda_{n,\# J} (F^J_{\phi_1,\dots,\phi_n}(\mu)- F_{\phi_1,\dots,\phi_n}(\mu) ), \nonumber
\end{equation}
where $\#J$ denotes the cardinality of the set $J$. In the above, for $J \subseteq [n]$, the function $F^J_{\phi_1,\dots,\phi_n}$ is equal to $F_{\phi_1',\dots,\phi_n'}$ where $\phi_i'$ is defined as follows: if $i \not \in J$, $\phi_i' = \phi_i$; if $i \in J$, then $\phi_i' = \phi_{\min J}$. That is, the functions corresponding to indices in $J$ are all replaced by the function corresponding to the lowest index in $J$. 

The $\Lambda$-Fleming-Viot process with mutation operator $A$, or the $(\Lambda,A)$-Fleming-Viot process, is the $\cM_1$-valued Markov process with generator $\cA^{\Lambda,A}$. We denote it by $(Z_t)_{t\geq 0}$ and we write $\bP^Z_\mu$ and $\E^Z_\mu$ to denote, respectively, the law and expectation associated to the process with initial measure $Z_0 = \mu \in \cM_1$. $(Z_t)_{t \geq 0}$ is a $\cM_1$-valued strong Markov process with c\`adl\`ag paths. We denote the standard (right-continuous) filtration generated by $Z_t$ by $(\cF^Z_t)_{t \geq 0}$, and will occasionally write $(\cF_t)_{t \geq 0}$. We refer to Remark 1.1 of \cite{BB09b} for a discussion of different constructions of $Z_t$ and the form of the generator on more general functions.




Let $S(\mu)$ denote the closed (topological) support of a measure $\mu \in \cM_f$. We denote the convolution of measures $\mu$ and $\nu$ by $\mu * \nu$; for $k \in \N$, the $k$-fold convolution of $\nu$ with itself is denoted $\nu^{(k)}$. Our main result is the following.

\begin{theorem} \label{thm_main}
Let $Z_t$ be a $(\Lambda,A)$-Fleming-Viot process, where $\Lambda \in \cM_f([0,1])$ and $A$ is the generator of a L\'evy process with non-degenerate L\'evy measure $\nu$.  Then for any $t>0$, with probability one,
\begin{equation}
S( \nu^{(k)} * Z_t) \subseteq S(Z_t) \quad \text{ for all } \, k \in \N. \nonumber
\end{equation}
If $S(\nu) = \R^d$, then $S(Z_t) = \R^d$ almost surely for all $t>0$.
\end{theorem}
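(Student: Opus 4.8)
The plan is to use the lookdown particle representation to reduce the closed-support statement to a statement about individual particles, and then to show that the mutation's jumps are realized near every support point by combining the independence of the particles' driving L\'evy motions with L\'evy's $0$--$1$ law.

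First I would record the reductions. Since $S(\mu*\eta)=\overline{S(\mu)+S(\eta)}$ for the measures at hand, one has $S(\nu^{(k)}*Z_t)=\overline{S(Z_t)+\underbrace{S(\nu)+\cdots+S(\nu)}_{k}}$, so it suffices to prove the single-jump inclusion $S(Z_t)+y\subseteq S(Z_t)$ for each fixed $y\in S(\nu)$; iterating handles all $k$, and intersecting over a countable dense subset of $S(\nu)$ together with closedness of $S(Z_t)$ handles all $y$. Fixing such a $y\ne 0$ (the case $y=0$ being trivial) and writing $\rho:=\nu(B(y,\delta))>0$, a covering argument over rational centers and radii reduces the single-jump inclusion to showing, for each fixed rational pair $(q,\delta)$,
\[
\bP\big(E\setminus D\big)=0,\qquad E:=\{Z_t(B(q,\delta))>0\},\quad D:=\{\exists\,j:\ \xi_j(t)\in B(q+y,3\delta)\},
\]
where $(\xi_j(t))_{j\ge1}$ are the time-$t$ lookdown particles. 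I will use that $S(Z_t)=\overline{\{\xi_j(t):j\ge1\}}$ and that, conditionally on $Z_t$, the $\xi_j(t)$ are exchangeable with empirical measure $Z_t$ (Section~\ref{s_lookdown}); note the asymmetry that $E$ requires positive mass near $q$ while $D$ asks only for one particle near $q+y$.

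Next I would set up the limiting argument. A fixed time $t$ is almost surely a continuity time of the c\`adl\`ag maps $s\mapsto Z_s$ and $s\mapsto\xi_j(s)$, so $D$ is (almost surely equal to) an $\cF_{t-}$-measurable event and $Z_{t-\eps}\to Z_t$ weakly as $\eps\downarrow0$; in particular $\liminf_{\eps\downarrow0}Z_{t-\eps}(B(q,\delta))\ge Z_t(B(q,\delta))$. By L\'evy's $0$--$1$ law, $\bP(D\mid\cF_{t-\eps})\to\mathbf 1_D$ almost surely. Hence it suffices to prove a conditional lower bound of the form
\[
\bP(D\mid\cF_{t-\eps})\ \ge\ 1-e^{-c'\rho\,Z_{t-\eps}(B(q,\delta))}\qquad\text{for all small }\eps>0,
\]
with $c'>0$ absolute: on $E$ the exponent stays bounded away from $0$ as $\eps\downarrow0$, so letting $\eps\downarrow0$ forces $\mathbf 1_D=1$ almost surely on $E$.

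The heart of the proof is this uniform bound, which rests on a balance between the number of \emph{free} particles near $q$ and the cost of a jump. Write $m:=Z_{t-\eps}(B(q,\delta))$. Conditionally on $\cF_{t-\eps}$, exchangeability and the de Finetti law of large numbers show that a proportion $\approx m$ of the low levels occupy $B(q,\delta)$ at time $t-\eps$. Because a lookdown event resets level $\ell$ only when $\ell$ participates as a non-minimal index, the reset rate of level $\ell$ is at most $\Lambda([0,1])\,\ell$, so level $\ell$ avoids every reset on $[t-\eps,t]$ with probability at least $e^{-\Lambda([0,1])\ell\eps}$; summing this geometric-type series shows that of order $1/\eps$ of the low levels---hence of order $m/\eps$ of those lying in $B(q,\delta)$---are never reset on $[t-\eps,t]$. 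For each such level the time-$t$ position is its starting point in $B(q,\delta)$ displaced by its own L\'evy increment over $[t-\eps,t]$; these increments are independent given the genealogy, which is itself independent of the motions, and each lands in $B(q+y,3\delta)$ with probability at least $\tfrac12\rho\eps$. The conditional expected number of successful particles is thus of order $(m/\eps)(\rho\eps)=\Theta(m\rho)$, and a conditional independence (second-moment) argument upgrades this to $\bP(D\mid\cF_{t-\eps})\ge1-e^{-c'\rho m}$, uniformly in $\eps$. I expect this balance---decoupling the genealogy from the motions and controlling the $1/\eps$-versus-$\eps$ trade-off, especially when the coalescent comes down from infinity and only finitely many lineages survive each short window---to be the main obstacle; the linear reset bound $r_\ell\le\Lambda([0,1])\ell$ is exactly what keeps $\Theta(1/\eps)$ never-reset low levels available, and is where the full generality of $\Lambda$ enters. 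Unwinding the reductions then yields $S(\nu^{(k)}*Z_t)\subseteq S(Z_t)$ for every $k$, and for the final assertion, if $S(\nu)=\R^d$ then, choosing any $x_0\in S(Z_t)$ and any $z\in\R^d$, we have $z-x_0\in S(\nu)$ and hence $z=x_0+(z-x_0)\in S(Z_t)$, so $S(Z_t)=\R^d$ almost surely.
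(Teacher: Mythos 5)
Your route is genuinely different from the paper's and, in outline, viable. The paper works at the level of mass rather than particles: it shows directly that $\nu * Z_t(B)>0$ forces $Z_t(B)>0$ for balls $B$, which requires ancestral blocks of positive asymptotic frequency (the quantities $N^t_t(2b)$), a dust/no-dust dichotomy, tuned sequences $(\delta_n,b_n)$ built from the speed function $v$ (Lemma~\ref{lemma_specialsequence}), an extended Borel--Cantelli argument showing $Z_{t-\delta_{n+1}}(B_\eps)\ge b_n/2$ infinitely often, a strong Markov step at stopping times, and an adjoint-semigroup/generator computation (Lemma~\ref{lemma_liminfgenerator}) to extract $\nu * Z_t$ from the small-time semigroup. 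Your reduction via $S(\nu * Z_t)=\overline{S(\nu)+S(Z_t)}$ to fixed jump vectors $y\in S(\nu)$, combined with the observation that $S(Z_t)$ is almost surely the closure of the particle positions, so that a \emph{single} successful particle suffices, eliminates the asymptotic-frequency and dust machinery, replaces Lemma~\ref{lemma_liminfgenerator} by the elementary one-big-jump bound $\bP^W_0(W_\eps\in B(y,2\delta))\ge \rho\eps/2$ for small $\eps$, and replaces the Borel--Cantelli/stopping-time scheme by L\'evy's $0$--$1$ law with a bound uniform in $\eps$. What the two proofs share is the essential engine: the $1/\eps$-versus-$\eps$ balance between the number of surviving lineages over a window of length $\eps$ and the probability of a jump within that window, i.e.\ the universal bound $v(\eps)\ge c_\Lambda \eps^{-1}$ of \eqref{e_blocknum_bd} together with Theorem~\ref{theorem_blocks}, and the conditional structure of Lemmas~\ref{lemma_ancestor_law} and~\ref{lemma_indep_ancestors}. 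Your closing argument for $S(\nu)=\R^d$ is also correct.

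However, the step you yourself flagged as the main obstacle is misstated for the construction of Section~\ref{s_lookdown}. In the (modified) lookdown construction, a fixed \emph{level} $\ell$ is disturbed not only when it participates non-minimally in a reproduction event (which does occur at rate at most $\sigma\ell$) but also whenever an offspring is inserted at any level below $\ell$, since the occupants are then shifted upward; already for the Kingman component this happens at rate $\Lambda(\{0\})\binom{\ell}{2}$, which grows quadratically in $\ell$. Conversely, an \emph{individual} is never spatially reset at all in this construction, but it may be pushed to infinite level within the window and thereby vanish from the time-$t$ configuration. So your linear ``reset rate'' is neither a correct bound for level disturbances nor the relevant notion for individuals, and the count of order $1/\eps$ never-reset low levels does not follow as stated. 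The correct replacement is available in the paper: by Lemma~\ref{lemma_labels}, the time-$(t-\eps)$ individuals represented in the time-$t$ configuration are exactly those at levels $1,\dots,N^t_\eps$; given the genealogy, each of their time-$t$ positions is an independent $W_\eps$-displacement of its time-$(t-\eps)$ position (Lemmas~\ref{lemma_ancestor_law} and~\ref{lemma_indep_ancestors}); and $N^t_\eps\ge c\,\eps^{-1}$ with probability $1-o(1)$ by \eqref{e_blocknum_bd} and Theorem~\ref{theorem_blocks} when the coalescent comes down from infinity, while $N^t_\eps=\infty$ otherwise, which only helps and incidentally absorbs the dust case that the paper must treat separately. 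With your never-reset count replaced by this ancestral count (and your clean bound weakened by the harmless factor $1-o(1)$ for the event $N^t_\eps\ge c\,\eps^{-1}$), the expected number of successes is still of order $\rho\,m$ uniformly in small $\eps$, your conditional bound $1-e^{-c'\rho m}$ goes through by the second-moment/independence argument, and the remainder of your proof is sound.
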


The case in which $S(\nu) = \R^d$ includes many important examples, such as the case when the mutation process is an $\alpha$-stable process with any index $\alpha \in (0,2)$. Our theorem of course implies that $S(Z_t)$ is unbounded whenever the mutation process has jumps. In contrast, it is known that if a $\Lambda$-coalescent comes down from infinity, then under an additional mild condition on the speed of coalescing, almost surely the support for the $\Lambda$-Fleming-Viot process with Brownian mutation remains compact at all strictly positive times; see Liu and Zhou \cite{LZ2012}. 

\subsection{Further discussion.} As previously noted, the analogous statement concerning the supports of Dawson-Watanabe superprocesses with jump type spatial motion has been known to hold for some time. For binary branching, Perkins first gave a proof in \cite{P1990}, and Perkins and Evans \cite{EP1991} gave an alternate proof shortly thereafter; see also Section III.2 of Perkins \cite{P2002}. Theorem~\ref{thm_main} establishes that instantaneous support propagation holds in full generality for $\Lambda$-Fleming-Viot processes with jump type mutation. The situation is somewhat different when one has Brownian mutation, as is indicated by the compact support theorems mentioned earlier. However, Birkner and Blath pointed out in \cite{BB09b} that if the $\Lambda$-coalescent does not come down from infinity, then the $\Lambda$-Fleming-Viot process with Brownian mutation has support equal to all of $\R^d$ a.s. for all $t>0$. The results of Liu and Zhou \cite{LZ2012} imply that staying infinite is a nearly optimal condition on the $\Lambda$-coalescent in order for this to occur, although whether or not there exist $\Lambda$-coalescents which come down from infinity for which the Brownian $\Lambda$-Fleming-Viot process has unbounded support remains unknown. For super-Brownian motion, the analogous case to the ``staying infinite" regime was recently studied by Mamin and Mytnik \cite{MM2020}.


It is generally thought that Fleming-Viot processes should have similar path properties to their Dawson-Watanabe counterparts, and our result is a verification of this heuristic for the support propagation property. The heuristic is justified in part by the explicit connection between certain sub-families of the two classes of processes. Building on work of Etheridge and March \cite{EM1991}, Perkins \cite{P1991} proved that a binary-branching super-Brownian motion conditioned to have a constant total mass equal to $1$ is a classical Fleming-Viot process with Brownian mutation. Another perspective is that the super-Brownian motion normalized by its total mass is equal in law to a Fleming-Viot process with (randomly) time-inhomogeneous Brownian mutation. This result was generalized in Birkner et al. \cite{BBCEMSW05} to a similar relation between a motionless Alpha-stable branching superprocess and mutationless Fleming-Viot process that is dual to a Beta coalescent. When the Dawson-Watanabe superprocess has a spatial motion, the corresponding $\Lambda$-Fleming-Viot process has time-inhomogeneous mutation. (The one exception to this rule is when the superprocess has Neveu's branching mechanism, which is associated to the Bolthausen-Sznitman, or Beta$(1,1$) coalescent, which stays infinite. With Brownian motion/mutation, both Neveu's superprocess and the $\Lambda$-Fleming-Viot process whose ancestral coalescent is the Beta$(1,1)$ coalescent have instantaneous support propagation \cite{BB09b, FS04}.)


The explicit connection between certain $\Lambda$-Fleming-Viot processes and Dawson-Watanabe superprocesses is a strong indication that they should share path properties, provided the associated time change is well-behaved. Blath \cite{Blath09} has proposed using this connection as a means of deducing path properties of Fleming-Viot processes from known properties of the associated Dawson-Watanabe superprocess. Such an approach is necessarily limited to Fleming-Viot processes which are dual to a Beta coalescent, as the correspondence only holds in this case; see \cite{BBCEMSW05}. To prove Theorem~\ref{thm_main}, we take a different approach which is modelled after the original proof of instantaneous propagation of Perkins \cite{P1990}. Our argument has a similar structure to Perkins', but where the original proof uses the branching property we make use of an ancestral decomposition. This is achieved using a lookdown construction. Other modifications are required to accommodate the general reproduction mechanism. Because we do not rely explicitly on the representation as a normalized, time-inhomogeneous Dawson-Watanabe superprocess, we are able to give a general proof which holds for reproduction mechanisms that are associated to general $\Lambda$-coalescents instead of only Beta coalescents.

While we do not explicitly rely upon a representation of the $\Lambda$-Fleming-Viot process as a Dawson-Watanabe superprocess, our approach is still informed by the heuristic that small-time behaviour of these processes is similar. It should be pointed out that, although the theorem of Birkner et al. \cite{BBCEMSW05} only holds for the Beta coalescents, there is a weaker connection between general $\Lambda$-coalescents and continuous-state branching processes. Berestycki et al.	 \cite{BBL14} constructed a small-time coupling between the $\Lambda$-coalescent and an associated continuous-state branching process via two lookdown representations defined via coupled point processes. They used this coupling to give an alternative proof of the speed of coming down from infinity of the $\Lambda$-coalescent. 

The approach we develop here appears to be a viable alternative to the program proposed by Blath \cite{Blath09}, mentioned above, for the study of support properties of generalized Fleming-Viot processes. As noted, the difficulty of analysing Fleming-Viot processes is in part because they are not infinitely divisible, and it is infinite divisibility/the branching property which underpins many of the proofs of support properties for Dawson-Watanabe superprocesses. Our argument substitutes the ancestral representation for the branching property. This acts as an approximate version of infinite divisibility: the decomposition of the process as the sum of a large, but not arbitrarily large, collection of identically distributed and nearly independent processes. This approximation becomes better as the time scale is taken to zero and appears to be a suitable surrogate for infinite divisibility for certain arguments. It would be interesting to see if this could be used to prove for Fleming-Viot processes some of the more precise support theorems known to hold for Dawson-Watanabe superprocesses, for example the exact Hausdorff measure function results in \cite{P1988, P1989}.

We conclude by noting that instantaneous propagation as stated here can also be a useful tool in the proof of more subtle support properties. In a recent paper \cite{H2021}, the first author used the instantaneous support propagation of the Dawson-Watanabe superprocess with $\alpha$-stable motion and $(1+\beta)$-stable branching mechanism as a tool in the proof of new properties concerning the behaviour of the density over fractal sets.

\subsection{Organization of the paper.} The remainder of the paper is organized as follows. Section~\ref{s_coalescents} defines the $\Lambda$-coalescents and states some results which we will use. In Section~\ref{s_lookdown} we introduce the lookdown construction for the $\Lambda$-Fleming-Viot process with mutation and state the properties of this model relevant to our proof. We prove Theorem~\ref{thm_main} in Section~\ref{s_pf}.

\section{$\Lambda$-coalescents} \label{s_coalescents}
We now give an overview of $\Lambda$-coalescents and state several results which we will use. There are several distinct (but related) classes of processes called ``coalescents" in probability theory; in our setting, a coalescent is an exchangeable integer partition-valued Markov process in which the only allowable transitions involve the merging, or coalescence, of blocks, and hence the partition can only become coarser. Such coalescents are models for stochastic coagulation and have applications in numerous areas. For a broad introduction to models of stochastic coagulation and fragmentation, see the book of Bertoin \cite{Bertoin_coag}. A detailed account of the theory of random exchangeable integer partitions and the $\Lambda$-coalescents can be found in the notes of Berestycki \cite{Berestycki_notes}. The $\Lambda$-coalescents form a general class of coalescing partition-valued Markov processes in which only one subset of blocks merges at any given time. Coalescents wherein multiple subsets of blocks can merge at the same time have also been studied and are referred to as $\Xi$-coalescents; see, for example Schweinsberg \cite{Schw2000a}.

For $n \in \N$, let $[n] := \{1,2,\dots,n\}$, and write $\cP_n$ and $\cP_\infty$ to denote the sets of partitions of $[n]$ and $\N$, respectively. We refer to the elements of a partition as its blocks. Let $\{\lambda_{b,k} : 1 \leq k \leq b \in \N\}$ be an array of non-negative rates. The rate $\lambda_{b,k}$ is the rate at which a given subset of $k$ blocks out of a total of $b$ blocks merges. Because a partition in $\cP_\infty$ may have infinitely many blocks, one defines the dynamics on $\cP_\infty$ via projections onto $\cP_n$ for all $n \in \N$, with the requirement that the projections are consistent and the projected process on $\cP_n$ is itself a Markov process. A structure theorem due to Pitman \cite{P1999} states that this is satisfied if and only if the array of rates satisfies
\begin{equation}
\lambda_{b,k} = \int_0^1 x^{k - 2} (1-x)^{b-k} \Lambda(dx) \nonumber
\end{equation}
for all $2 \leq k \leq b \in \N$ for some Borel measure $\Lambda \in \cM_f([0,1])$. Given $\Lambda$ and its corresponding array of rates, the $\Lambda$-coalescent, denoted $(\Pi(t) : t \geq 0)$, is the Markov process on $\cP_\infty$ such that for all $n \in \N$, its projection onto $\cP_n$, denoted $\Pi_n(t)$, is a Markov process governed by the rates $\{ \lambda_{b,k} : 2 \leq k \leq b \in [n]\}$ as described above. We write $\bP^\Pi$ to denote the law of $\Pi(t)$ and unless otherwise noted we always assume that $\Pi(0)$ is the singleton partition of $\N$. The partition $\Pi(t)$ is easily seen to be exchangeable because the merger rates do not depend on which integers are in the blocks.

Kingman's coalescent, in which all mergers are binary, corresponds to the measure $\Lambda = \delta_0$. The well-studied Beta coalescents arise when $\Lambda$ is chosen to be the distribution of a Beta$(2-\alpha,\alpha)$ random variable for $\alpha \in (0,2)$.

We write $\# A$ to denote the cardinality of a countable set $A$. Then $N_t := \# \Pi(t) \in \N \cup \{+ \infty\}$ is the number of blocks in $\Pi(t)$. We will write
\[ \Pi(t) = (\pi_{i}(t) : i = 1,\dots,N_t),\]
where the sequence is infinite if $N_t = \infty$, and we use the convention that the blocks are ordered by their minimum elements, so that $\min \{j : j \in \pi_i(t)\} < \min \{j : j \in \pi_{i+1}(t)\}$ for $i = 1, \dots, N_t - 1$.
For $i \leq N_t$ we define the \textit{asymptotic frequency} of $\pi_i(t)$ to be
\begin{equation}
|\pi_i(t)| := \lim_{n \to \infty} n^{-1} \#(\pi_i(t)\cap [n]). \nonumber 
\end{equation}
It follows from Kingman's theory of exchangeable partitions  \cite{Kingman} that the above limit exists for all $t>0$ almost surely. This result  was later reproved by Aldous \cite{Ald85} using de Finetti's theorem. 

The importance of the $\Lambda$-coalescents in our work is because they encode the genealogies of generalized Fleming-Viot processes. Indeed, we observe that given $\Lambda \in \cM_f([0,1])$, the rates $\lambda_{b,k}$ defined above are the same as those given in the introduction to define the generator of $Z_t$. The correspondence is more transparent in the non-spatial setting, and to illustrate it we briefly consider a $\Lambda$-Fleming-Viot process without mutation. This is the measure-valued Markov process on $\cM_1(E)$ for some compact Polish space $E$ (typically $E = [0,1]$) with generator
\begin{equation}
\cA_0^\Lambda F_{\phi_1,\dots,\phi_n}(\mu) := \sum_{J \subset [n] : \# J \geq 2} \lambda_{n,\# J}( F^J_{\phi_1,\dots,\phi_n}(\mu)-F_{\phi_1,\dots,\phi_n}(\mu)),
\end{equation}
where $F_{\phi_1,\dots,\phi_n}, F^J_{\phi_1,\dots,\phi_n} \in \cG(\cM_f(E))$ are as defined in the introduction for $n \in \N$, $\phi_i \in \cB_b(E)$ for $i \in [n]$, and $J \subset [n]$. The process generated by $\cA_0^\Lambda$ is a measure-valued dual process to the $\Lambda$-coalescent in the following way: one can consider the \textit{ranked mass coalescent} associated to $\Pi(t)$, which is the vector of asymptotic frequencies of blocks in $\Pi(t)$ listed in decreasing order. (Special care must be taken if $\Pi(t)$ contains singleton blocks, in which case we say $\Pi(t)$ has dust.) Provided the initial state of the mutationless $\Lambda$-Fleming-Viot process has no atoms, the process of ranked vectors of its atoms' masses is equal in distribution to the ranked mass coalescent associated to $(\Pi(t) : t \geq 0)$, which follows from the lookdown representation. See Theorem 3.1 of \cite{BBCEMSW05} for a related result.

Unlike Dawson-Watanabe superprocesses, there is no useful expression for the Laplace functional of Fleming-Viot processes. The study of Fleming-Viot processes often relies on analysis of the moments, which can be expressed using a dual process involving the coalescent.


We now resume our discussion of the number of blocks. A $\Lambda$-coalescent {\it comes down from infinity} if, when the initial partition has infinitely many blocks, $\Pi(t)$ has finitely many blocks a.s. for all $t>0$. On the other hand, it {\it stays infinite} if, with probability one, $\Pi(t)$ has infinitely many blocks for all $t>0$. Given $\Lambda(\{1\})=0$, the $\Lambda$-coalescent either comes down from infinity or stays infinite; see \cite{P1999}. Kingman's coalescent and the Beta coalescent with $\alpha \in (1,2)$ come down from infinity, while the Beta coalescents with $\alpha \in (0,1]$ stay infinite. A necessary and sufficient condition on the merger rates for $\Pi(t)$ to come down from infinity was first given by Schweinsberg \cite{Schw2000}. We review a different but equivalent criterion below.

Let $\Lambda \in \cM_f([0,1])$. We assume throughout that $\Lambda(\{1\}) = 0$. We define $\psi_\Lambda : \R^+ \to \R$ by
\begin{equation} \label{psi_def}
\psi_\Lambda(u) = \Lambda(\{0\})u^2 +
 \int_{(0,1]} \left(e^{-u x} -1 + ux \right) x^{-2} \Lambda(dx).
\end{equation}
Then $\psi_\Lambda$ is the Laplace exponent of a spectrally positive (one-dimensional) L\'evy process and hence is also the branching mechanism of a continuous state branching process (CSBP). We refer to Chapter 12 of Kyprianou \cite{Ky14} and Li \cite{Li19} for introductions on continuous-state branching and the Lamperti transform which maps between spectrally positive L\'evy processes and CSBPs.

It was observed by Bertoin and Le Gall \cite{BG2006} that the $\Lambda$-coalescent comes down from infinity if and only if
\begin{equation}
\int_1^\infty \psi_\Lambda(u)^{-1} du < \infty. \nonumber
\end{equation}
One can in fact also obtain information about the speed of coming down from infinity (i.e. the asymptotic behaviour of $N_t$ as $t\downarrow 0$) by analysing $\psi_\Lambda(u)$. For $t>0$, we define $v(t)$ by
\begin{equation}\label{def_v}
v(t) := \inf \{r > 0  : \int_r^\infty \psi_\Lambda(u)^{-1} du > t \},
\end{equation}
with the convention that $\inf \emptyset = \infty$. Then either $v(t) = \infty$ for all $t$, or $v(t)$ is finite for all $t$ and satisfies
\begin{equation}
\int_{v(t)}^\infty \psi_\Lambda(u)^{-1} du = t. \nonumber
\end{equation}
A convergence in probability version of the following result is originally due to Bertoin and Le Gall \cite{BG2006}. The version we state now was proved by Berestycki et al. \cite{BBL10}.

\begin{theorem}\label{theorem_blocks} Let $\Lambda \in \cM_f([0,1])$ be such that the $\Lambda$-coalescent comes down from infinity. Then
\begin{equation}
\lim_{t\to 0}\frac{N_t}{v(t)} = 1 \nonumber
\end{equation}
almost surely and in $L^p$ for all $p \geq 1$.\end{theorem}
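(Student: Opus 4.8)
The plan is to read the assertion as a fluid-limit (law of large numbers) statement and to prove it by a martingale concentration argument built from the generator of the block-counting process. Differentiating the defining identity $\int_{v(t)}^\infty \psi_\Lambda(u)^{-1}\,du = t$ in \eqref{def_v} shows that $v$ solves the autonomous ODE $v'(t) = -\psi_\Lambda(v(t))$ with $v(0+) = \infty$. Thus $v$ is the deterministic trajectory obtained by running the flow $\dot u = -\psi_\Lambda(u)$ from infinity, and the claim $N_t/v(t) \to 1$ says precisely that the random, non-increasing, integer-valued process $N_t$ hugs this trajectory as $t \downarrow 0$.

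The process $N_t = \#\Pi(t)$ is itself a Markov jump process: a $k$-fold merger among $n$ blocks occurs at rate $\binom{n}{k}\lambda_{n,k}$ and decreases the block count by $k-1$, so its generator is
\begin{equation}
\cL g(n) = \sum_{k=2}^n \binom{n}{k}\,\lambda_{n,k}\,\big(g(n-k+1) - g(n)\big). \nonumber
\end{equation}
Applying $\cL$ to the identity function $g(n) = n$ and interchanging the finite sum with the integral defining $\lambda_{n,k}$, the total rate of decrease is
\begin{equation}
\gamma_n := -\cL g(n) = \sum_{k=2}^n \binom{n}{k}\,\lambda_{n,k}\,(k-1) = \int_0^1 \big(nx - 1 + (1-x)^n\big)\,x^{-2}\,\Lambda(dx). \nonumber
\end{equation}
Comparing this with \eqref{psi_def} via $(1-x)^n$ against $e^{-nx}$ and the behaviour of the integrand near $x = 0$, one verifies that $\gamma_n \sim \psi_\Lambda(n)$ as $n \to \infty$, so that the expected rate of decrease of $N_t$ at level $n$ is, to leading order, the drift $-\psi_\Lambda(n)$ of the flow. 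I then introduce the Lyapunov function $L(n) := \int_n^\infty \psi_\Lambda(u)^{-1}\,du$, which is the inverse of $v$ in the sense that $L(v(t)) = t$. Since $L'(u) = -\psi_\Lambda(u)^{-1}$, a first-order expansion gives $L(n-k+1) - L(n) \approx (k-1)/\psi_\Lambda(n)$, whence
\begin{equation}
\cL L(n) \approx \frac{1}{\psi_\Lambda(n)}\sum_{k=2}^n \binom{n}{k}\,\lambda_{n,k}\,(k-1) = \frac{\gamma_n}{\psi_\Lambda(n)}, \nonumber
\end{equation}
which tends to $1$ by the asymptotic equivalence above. Thus $M_t := L(N_t) - t$ is an approximate martingale, and since $L(N_{0+}) = L(\infty) = 0$ we expect $\E[L(N_t)] \approx t$.

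To make this rigorous I would compensate $L(N_t)$ into a genuine martingale $M_t$ and control its predictable quadratic variation
\begin{equation}
\langle M\rangle_t = \int_0^t \sum_{k=2}^{N_s} \binom{N_s}{k}\,\lambda_{N_s,k}\,\big(L(N_s-k+1) - L(N_s)\big)^2\,ds. \nonumber
\end{equation}
For typical (small) mergers the squared increment is $\approx (k-1)^2/\psi_\Lambda(N_s)^2$, so this part of the integrand is of order $1/\psi_\Lambda(N_s)$, which is small for $s$ near $0$; consequently $\langle M\rangle_t = o(t^2)$ and, by Doob's or the Burkholder--Davis--Gundy inequality, $L(N_t)/t \to 1$ in $L^p$ and in probability. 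Inverting $L$ (equivalently, using the continuity and strict monotonicity of $v$) converts this into $N_t/v(t) \to 1$ in $L^p$. To upgrade to almost sure convergence I would evaluate along a geometric sequence $t_j = \rho^j$, combine the $L^p$ estimates with Borel--Cantelli to get convergence along $(t_j)$, and then interpolate for $t \in (t_{j+1}, t_j)$ using the monotonicity of $s \mapsto N_s$ and of $v$, which sandwiches $N_t/v(t)$ between multiples of $N_{t_j}/v(t_j)$ and $N_{t_{j+1}}/v(t_{j+1})$; the regularity of $v$ keeps the ratios $v(t_j)/v(t_{j+1})$ bounded and close to $1$ as $\rho \uparrow 1$.

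The main obstacle is the honest control of large mergers. The expansion $L(n-k+1) - L(n) \approx (k-1)/\psi_\Lambda(n)$ is accurate only while $k$ is small relative to $n$; when a macroscopic fraction of the blocks coalesces, $\psi_\Lambda$ varies substantially across $[n-k+1,n]$, the increment $L(n-k+1) - L(n)$ saturates rather than growing linearly in $k$, and both the drift identity $\cL L(n) \to 1$ and the bound $\langle M\rangle_t = o(t^2)$ acquire remainder terms. Showing that the cumulative contribution of these large-$k$ events is negligible on the vanishing time scale, and quantifying both the comparison $\gamma_n \sim \psi_\Lambda(n)$ and the expansion error uniformly over the relevant range of $n$, is the technical heart of the argument; this is precisely where the coming down from infinity hypothesis enters, through the integrability $\int_1^\infty \psi_\Lambda^{-1} < \infty$ and the attendant growth of $\psi_\Lambda$.
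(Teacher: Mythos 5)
You should first know that the paper contains no proof of Theorem~\ref{theorem_blocks} to compare against: it is quoted from Berestycki, Berestycki and Limic \cite{BBL10} (the in-probability version being due to Bertoin and Le Gall \cite{BG2006}), so your sketch must be measured against the literature proof, whose broad strategy -- a martingale analysis of the block-counting chain against the fluid limit $v'(t) = -\psi_\Lambda(v(t))$ -- your proposal does resemble. However, beyond the gaps you flag yourself, the one quantitative claim you commit to is wrong under the paper's conventions: the asserted equivalence $\gamma_n \sim \psi_\Lambda(n)$ fails whenever $\Lambda(\{0\}) > 0$ with $\psi_\Lambda$ as in \eqref{psi_def}. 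For Kingman's coalescent your own formula gives $\gamma_n = \binom{n}{2} \sim n^2/2$, while \eqref{psi_def} gives $\psi_\Lambda(n) = n^2$; correspondingly \eqref{def_v} yields $v(t) = 1/t$, whereas the classical Kingman speed is $N_t \sim 2/t$. The explanation is a factor-$2$ slip in \eqref{psi_def} itself relative to \cite{BBL10}, where the integrand is extended to $x = 0$ by the value $u^2/2$ rather than $u^2$; this is harmless for the coming-down-from-infinity criterion but fatal for the speed statement, and your drift computation in fact detects the inconsistency. Any correct write-up must fix this normalization before the approximate-martingale identity $\cL L(n) \to 1$ can hold.

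The more serious structural problem is your almost-sure upgrade, which fails as described rather than being merely unfinished. The geometric grid $t_j = \rho^j$ plus the monotone sandwich requires $v(t_j)/v(t_{j+1})$ to be uniformly close to $1$, i.e.\ a doubling or regular-variation property of $v$ that coming down from infinity does not supply: take $\Lambda(dx) = \log(1/x)\,dx$, for which $\psi_\Lambda(u) \asymp u(\log u)^2$, the coalescent comes down from infinity, and $v(t) = e^{c/t}$ up to constants; then $v(t_j)/v(t_{j+1}) = \exp\bigl(c\rho^{-j}(\rho^{-1}-1)\bigr) \to \infty$ for \emph{every} $\rho < 1$, so the sandwich yields nothing, and sparsifying the grid to make the Borel--Cantelli sums converge only worsens the interpolation. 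This is precisely why \cite{BBL10} discretize in space -- stopping times at prescribed block-count levels -- rather than in time. In addition, the two estimates you defer (uniform two-sided control of $L(n-k+1)-L(n)$ over all $k \le n$ including macroscopic mergers, and $\langle M \rangle_t = o(t^2)$ uniformly over the random range of $N_s$) constitute the substance of the theorem rather than technical residue; and even granting $L(N_t)/t \to 1$ in $L^p$, inverting the nonlinear map $L$ gives convergence of $N_t/v(t)$ almost surely or in probability, but the $L^p$ statement requires uniform integrability of $(N_t/v(t))^p$, i.e.\ separate moment bounds on $\sup_{s \le t} N_s/v(s)$, which you never address and which \cite{BBL10} must and do establish. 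In sum: right family of ideas, but the proposal is a program whose hardest steps are open and whose two committed steps (the $\gamma_n \sim \psi_\Lambda(n)$ claim under \eqref{psi_def}, and the time-grid almost-sure argument) are incorrect as stated.
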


A corollary of this result is a global bound on the speed of coming down from infinity. It asserts, essentially, that Kingman's coalescent comes down from infinity the fastest. Let $\Lambda \in \cM_F([0,1])$. From \eqref{psi_def}, we deduce that
\[ \psi_\Lambda(u) \leq (\Lambda(\{0\}) + \frac 1 2 \Lambda((0,1])) u^2.\]
It then follows from the definition of $v(t)$ that there is a constant $c_\Lambda > 0$ such that
\begin{equation} \label{e_blocknum_bd}
v(t) \geq c_\Lambda t^{-1} \quad \text{ for all } t>0.
\end{equation}
Theorem~\ref{theorem_blocks} then implies that for all $\eps \in (0,c_\Lambda)$,
\begin{equation}
\liminf_{t\to 0}\frac{N_t}{t^{-1}} \geq c_\Lambda - \eps \nonumber
\end{equation}
almost surely.

The proof of our main result does not require detailed information on the $\Lambda$-coalescent associated to the Fleming-Viot process. The argument is structured to allow maximum flexibility (and in fact complete generality) of the reproduction mechanism, and hence of the ancestral coalescent. The fact that $N_t$ is at least of order $t^{-1}$ for small $t$ is essentially the only property we use, and is used to prove a mild lemma (Lemma~\ref{lemma_specialsequence}).

\section{The lookdown construction} \label{s_lookdown}

In this section we introduce the countable representation, or ``lookdown" construction, of the $\Lambda$-Fleming Viot process.  The idea of using empirical measures of particle systems to approximate the Fleming-Viot process first appeared in Dawson and Hochberg \cite{DH82} to study the support of the Fleming-Viot process. Donnelly and Kurtz \cite{DK96} first proposed a  lookdown representation that provides nested exchangeable particle approximations to the classical Fleming-Viot process, and further extended the representation to more general measure-valued processes \cite{DK99}. Such a particle representation encodes the genealogy of the measure-valued process. The version of the lookdown construction used here was originally termed the ``modified" lookdown construction (see \cite{DK99}) but is now a standard version, see Birkner and Blath \cite{BB09a} and Blath \cite{Blath09}, and in the sequel we omit the qualifer ``modified."

Let $\Lambda \in \cM_F([0,1])$. We will assume that $\Lambda(\{1\}) = 0$. We consider a $(\R^d)^\infty$-valued process $X(t) = (X_1(t), X_2(t), \dots)$. For $n \in \N$, we define the empirical measure
\begin{equation}
Z^{(n)}_t := n^{-1} \sum_{i = 1}^n \delta_{X_t^i}. \nonumber
\end{equation}
If the vector $(X_1(t), X_2(t), \dots)$ is exchangeable, then by de Finetti's Theorem the empirical measures converge weakly. We can then define
\begin{equation}
Z_t := \lim_{n \to \infty} n^{-1} Z^{(n)}_t. \nonumber
\end{equation}
The lookdown construction is defined in such a way that if the initial vector $(X_1(0), X_2(0),\dots)$ is exchangeable, then so is $(X_1(t), X_2(t), \dots)$, and $Z_t$ is defined for all $t>0$.

We next follow Birkner and Blath \cite{BB09a} and Birkner et al. \cite{BBMST} to present details of the lookdown construction. Let $(X_1(0), X_2(0),\dots) \in (\R^d)^\infty$ be an exchangeable random vector. In order to define the process $(X_1(t), X_2(t), \dots)$, we introduce several families of Poisson point processes. First, let $\{\bN_{ij}(t) : 1 \leq i < j \in \N \}$ be a family of independent Poisson processes on $[0,\infty)$ with rate $\Lambda(\{0\})$. At each jump time of the process $\bN_{ij}$, the particle at level $j$ looks down (since $j >i$) to the particle at level $i$ and takes its type (or location). A jump of $N_{ij}$ therefore corresponds to a reproduction event for the particle at level $i$. The levels of other particles are shifted up to account for the birth. That is, supposing that $\Delta \bN_{ij}(t) = 1$, we have
\begin{equation*}
X_k(t) =
\begin{cases}  X_k(t-) & \text{ if } k < j,\\
X_i(t-) & \text{ if } j = k,\\
X_{k-1}(t-) & \text{ if } j < k. \end{cases}
\end{equation*} 
The dynamics described above govern the binary mergers and hence the ``Kingman" component of the $\Lambda$-coalescent embedded in the model. We now describe the dynamics arising from multiple mergers. We define $\Lambda_0 = \Lambda - \Lambda(\{0\})\delta_0$. Let $\bN$ be a Poisson point process on $[0,\infty) \times (0,1]$ with intensity $dt \otimes u^{-2}\Lambda_0(du)$. Let $\{(t_i, u_i) : i = 1,2,\dots \}$ be an enumeration of the points in $\bN$. For each point $(t_i, u_i)$, we associate a sequence $\{Y_{ik} : k=1,2,\dots \}$ of independent Bernoulli$(u_i)$ random variables. If $Y_{ik} = 1$, then the particle at level $k$ at time $t_i$ takes part in the birth event at this time. All participating levels take the location of the lowest participating level, and all the other particle positions retain their original order and are shifted upward accordingly. If the jump occurs at time $t = t_i$, and $j$ is the lowest index such that $Y_{ij} = 1$, then
\begin{equation}
X_k(t) = \begin{cases}
X_k(t-) & \text{ if } k \leq j, \\
X_j(t-) & \text{ if } k > j\text{ and }  Y_{ik} = 1 , \\
X_{k - J_{ik}} (t-) & \text{ if } k > j \text{ and }  Y_{ik} = 0, \end{cases} \nonumber
\end{equation}
where $J_{ik} = \# \{ l < k : Y_{il} = 1\} - 1$.

Let $(W_1(t), W_2(t), \dots)$ be a sequence of independent mutation processes. Between the jump times, the increments of $X_i(t)$ are those of $W_i(t)$ for each $i = 1,2,\dots$. These dynamics are well-defined, as one can show that for any $i \in \N$, the number of lookdown events involving level $i$ up to time $t$ is finite. One can rigorously realize the dynamics of $(X_1(t),X_2(t),\dots)$ as a countable system of stochastic differential equations driven by $\{\mathbf{N}_{ij} : 1 \leq i < j \in \N \}$, $\mathbf{N}$, $\{Y_{ik} : i, k \in \N\}$ and $(W_1(t) , W_2(t),\dots)$. Rather than include this here, we refer to \cite{DK99, BBMST} for details. 

Hereafter, we will always choose to realize the lookdown process by choosing the initial particle positions of $(X_1(0), X_2(0), \dots)$ to be i.i.d. samples from a probability measure $\mu \in \cM_1$.  We denote by $\cL^{A,\Lambda}_\mu$ a probability measure which realizes the above construction with this choice of initial positions. The following is a consequence of Theorems 1.1 and 3.2 of \cite{DK99}.

\begin{theorem}
Under $\cL^{A,\Lambda}_\mu$, $(X_1(t),X_2(t),\dots)$ is exchangeable for all $t>0$, and
\begin{equation}
Z_t = \lim_{n \to \infty} n^{-1} \sum_{i=1}^n \delta_{X_i(t)} \nonumber
\end{equation}
exists in $\cM_1(\R^d)$ for all $t>0$ almost surely. Furthermore,
\begin{equation}
\cL^{A,\Lambda}_\mu(Z_\cdot \in \cdot) = \bP^Z_\mu(Z_\cdot \in \cdot). \nonumber
\end{equation}
\end{theorem}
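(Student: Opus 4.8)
The plan is to establish the three assertions---exchangeability, existence of the de Finetti limit, and identification of its law---in turn, working throughout with the finite subsystems of the lookdown. The crucial structural feature to exploit first is that, for each $n \in \N$, the truncated vector $(X_1(t),\dots,X_n(t))$ is itself an autonomous $(\R^d)^n$-valued Markov process: a particle only ever looks \emph{down} to a lower level, so no event alters $X_1,\dots,X_n$ using data from levels above $n$, and between reproduction events the coordinates evolve as the independent mutations $W_1,\dots,W_n$. (The finiteness of the number of events affecting levels $\le n$ on $[0,t]$, which makes this dynamics well-posed, is the property already recorded above.) First I would write down the generator $\cG_n$ of this subsystem explicitly as the sum of a mutation part $\sum_{i=1}^n A_i$, the operator $A$ acting on the $i$-th coordinate, and a reproduction part assembled from the binary rates $\Lambda(\{0\})$ of the $\bN_{ij}$ with $i<j\le n$ and the multiple-merger contribution of $\bN$ restricted to the Bernoulli marks $Y_{i,1},\dots,Y_{i,n}$.

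Second, I would prove that $(X_1(t),\dots,X_n(t))$ is exchangeable for each fixed $t$. This is the subtle point, since the lookdown generator is manifestly \emph{not} symmetric in the levels and $\cG_n$ does not preserve symmetric functions; exchangeability holds only at fixed times, not pathwise. The approach I would take is to compare the subsystem with a genuinely symmetric Moran-type resampling model $(\xi_1(t),\dots,\xi_n(t))$ in which, at the matching rates, a subset $J\subseteq[n]$ of levels is selected and all of its members adopt a common, symmetrically chosen type while each coordinate mutates via $A$. The symmetric model trivially preserves exchangeability, because its generator maps symmetric functions to symmetric functions and its initial law is exchangeable. One then verifies that the two generators agree when tested against symmetric functions evaluated on exchangeable configurations---equivalently, that the asymmetric relabelling (the upward shift) in the lookdown becomes invisible once one averages over an exchangeable law---so that, since symmetric test functions determine an exchangeable law, the time-$t$ marginal of the lookdown subsystem coincides with that of the symmetric model. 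I expect this comparison to be the main obstacle, both because it is where the special design of the lookdown is essential and because one must track the bookkeeping of the shifts carefully; the matching of rates rests on the identity $\int_0^1 u^{k-2}(1-u)^{n-k}\Lambda(du)=\lambda_{n,k}$, with the $k=2$ contribution supplying the Kingman part.

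Third, granting exchangeability, existence of $Z_t$ for each fixed $t$ is immediate from de Finetti's theorem and the law of large numbers for exchangeable sequences: $n^{-1}\sum_{i=1}^n\delta_{X_i(t)}$ converges weakly, almost surely, to the random directing measure. To upgrade this to the simultaneous statement for all $t>0$, I would show that for each $\phi$ in a countable convergence-determining subset of $C_b(\R^d)$ the real-valued processes $\langle\phi,Z^{(n)}_\cdot\rangle$ converge uniformly on compact time intervals almost surely. This I would obtain from a maximal inequality for the martingale part of $\langle\phi,Z^{(n)}_t\rangle-\langle\phi,Z^{(m)}_t\rangle$ together with a Borel--Cantelli argument along a subsequence, the relevant bound coming from control of the quadratic-variation contributions of the high levels. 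Uniform convergence for a convergence-determining family, combined with c\`adl\`ag regularity, then produces a c\`adl\`ag $\cM_1$-valued limit $Z_\cdot$ with $Z^{(n)}_\cdot\to Z_\cdot$ uniformly on compacts.

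Finally, to identify the law I would show that $Z_\cdot$ solves the martingale problem for $\cA^{\Lambda,A}$. For a test function $F=F_{\phi_1,\dots,\phi_k}$ with $\phi_i\in C^2_b$, the quantity $F(Z^{(n)}_t)$ is a function of the autonomous subsystem $(X_1(t),\dots,X_n(t))$, so $F(Z^{(n)}_t)-\int_0^t\cG_n[F\circ\mathrm{emp}_n](X(s))\,ds$ is a martingale, where $\mathrm{emp}_n(x)=n^{-1}\sum_{i=1}^n\delta_{x_i}$. The core computation is to expand $\cG_n[F\circ\mathrm{emp}_n]$ and check that it converges, as $n\to\infty$, to $\cA^{\Lambda,A}F(Z_s)$: the mutation part produces $\sum_i\langle A\phi_i,Z_s\rangle\prod_{j\ne i}\langle\phi_j,Z_s\rangle$, while the reproduction part yields the resampling sum $\sum_{J\subseteq[k]:\#J\ge2}\lambda_{k,\#J}(F^J-F)(Z_s)$ in the limit, the finitely many particles lost off level $n$ at each event contributing an error that vanishes as $n\to\infty$. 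Passing to the limit in the martingale relation---justified by the uniform convergence from the previous step together with uniform integrability---shows that $Z_\cdot$ solves the $\cA^{\Lambda,A}$-martingale problem. Well-posedness of that martingale problem, which characterises the $(\Lambda,A)$-Fleming--Viot process, then gives $\cL^{A,\Lambda}_\mu(Z_\cdot\in\cdot)=\bP^Z_\mu(Z_\cdot\in\cdot)$, as claimed.
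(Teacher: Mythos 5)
The paper itself gives no proof of this theorem: it is imported verbatim as a consequence of Theorems 1.1 and 3.2 of Donnelly and Kurtz \cite{DK99}. So your proposal must be measured against the argument of that reference, and in architecture you have reproduced it faithfully: autonomy of the finite subsystems $(X_1,\dots,X_n)$ (the defining feature of looking \emph{down}), exchangeability at fixed times via comparison with a symmetric Moran-type system, de Finetti plus a martingale maximal inequality to get almost-sure convergence simultaneously in $t$, and identification of the limit law through the $\cA^{\Lambda,A}$-martingale problem together with its (duality-based) well-posedness. Steps one, three and four of your outline are sound and are essentially the cited proof.

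Step two, however, has a genuine gap as you have stated it. The identity you propose to verify---that the lookdown generator $\cG_n$ and the symmetric Moran generator agree when tested against symmetric functions \emph{and} integrated over an exchangeable configuration---is true, but it does not by itself yield equality of the time-$t$ marginals. The obstruction is closure: $\cG_n$ does not map symmetric functions to symmetric functions. Already for $n=2$ and $f(x_1,x_2)=\phi(x_1)\phi(x_2)$, writing $\sigma=\Lambda([0,1])$ and $K_2$ for the Moran generator, one computes $\cG_2 f(x)-K_2 f(x)=\frac{\sigma}{2}\left(\phi(x_1)^2-\phi(x_2)^2\right)$, which vanishes only in mean under an exchangeable law, not pointwise. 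Consequently the evolution $\frac{d}{dt}\,\E f(X(t))=\E\,\cG_n f(X(t))$ for symmetric $f$ is not a closed system: invoking your identity at time $t$ presupposes that $X(t)$ is exchangeable, which is exactly what is to be proved, and matching higher-order terms (e.g.\ $\E\,\cG_n^2 f$ against $\E\,K_n^2 f$ at $t=0$) is no longer an instance of the one-step identity. This is precisely where Donnelly and Kurtz do something you have not: they construct a pathwise \emph{coupling}, attaching auxiliary uniform random permutations to the reproduction events so that the resulting Moran-type system is genuinely exchangeable while its configuration at every time is a permutation of the lookdown configuration; equality of empirical measures and exchangeability of the fixed-time lookdown marginals then transfer immediately. (Alternatively one can invoke Kurtz's Markov mapping theorem.) With that ingredient substituted for your generator comparison, the rest of your outline---including the limit computation $\cG_n[F\circ\mathrm{emp}_n]\to\cA^{\Lambda,A}F$ and the passage to the limiting martingale problem---goes through as in \cite{DK99}.
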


Thus we can realize the $\Lambda$-Fleming-Viot process with mutation as the process of (limiting) empirical measures of the particle system under $\cL^{A,\Lambda}_\mu$. As we now discuss, this construction of $Z_t$ encodes its genealogy.

Fix $t>0$ and consider the process $(X_1(s),X_2(s),\dots)$ running forward to time $s$. We consider time in reverse, backwards from $t$. For the individual which is at level $i$ at time $t$, we denote the level of its ancestor at time $t-s$ by $L^t_i(s)$, for $s \in [0,t]$. (Note that $L^t_i(0) = i$ for all $i \in \N$.) For fixed $t>0$, the collection of ancestor processes, $(L^t_i(s) : s \in [0,t])$ for $i \in \N$, satisfy a natural system of stochastic diferential equations driven by the lookdown construction's Poisson point processes; see Section 5 of \cite{DK99}. We emphasize that $s \in [0,t]$ is the time \textit{before} the reference time $t$; given $0 \leq s \leq t$, the ancestor level $L^t_i(s)$ corresponds to an individual in the population at (forward) time $t-s$. This is the convention used e.g. by Birkner and Blath \cite{BB09b}.

To recover the ancestral coalescent, we define blocks which consist of all individuals with a given common ancestor. With $t$ fixed as above and $s \in [0,t]$, the blocks in the ancestral partition at time $s$ before $t$ are the classes defined by the equivalence relation $i \sim j$ if $L^t_i(s) = L^t_j(s)$. The collection of such equivalence classes is an integer partition and we denote it by $\Pi^t(s)$. (Again, our convention is that $s$ denotes the time before $t$ and thus $\Pi^t(s)$ corresponds to the forward time process at time $t-s$.)

\begin{proposition} Given $t>0$, $(\Pi^t(s) : s \in [0,t])$ has the law of a $\Lambda$-coalescent started at the singleton partition running to time $t$.
\end{proposition}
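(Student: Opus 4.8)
The plan is to show that the ancestral partition process $(\Pi^t(s) : s \in [0,t])$, read backwards in time from the reference time $t$, is a Markov process on $\cP_\infty$ whose projections onto $\cP_n$ are governed by precisely the merger rates $\lambda_{n,k}$ of the $\Lambda$-coalescent. Since a $\Lambda$-coalescent is characterized (via Pitman's structure theorem) by these projected rates together with consistency, establishing the correct transition rates for each finite $n$ suffices. First I would fix $n$ and restrict attention to the ancestor levels $L^t_1(s),\dots,L^t_n(s)$ of the $n$ lowest levels at time $t$; the induced partition $\Pi^t_n(s)$ of $[n]$ is what I must analyze. The key point is that blocks merge in $\Pi^t(s)$ exactly when the ancestor levels $L^t_i(s)$ collide, which happens precisely at the lookdown birth events of the forward construction, now viewed in reverse time.

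The main computation is to verify the merger rates. For the binary (Kingman) part, I would argue that a birth event driven by $\bN_{ij}$ at forward time $t-s$ causes the particle formerly at level $j$ to inherit the ancestor of level $i$; reading this backwards, two ancestral lines currently at a pair of levels merge. Because the $\bN_{ij}$ are independent rate-$\Lambda(\{0\})$ Poisson processes and by exchangeability of the construction, any specified pair of the current ancestral blocks coalesces at rate $\Lambda(\{0\}) = \lambda_{b,2}$ when restricted to the relevant finitely many levels. For the multiple-merger part, I would use the Poisson point process $\bN$ with intensity $dt \otimes u^{-2}\Lambda_0(du)$ together with the i.i.d. Bernoulli$(u_i)$ marks $Y_{ik}$: at a point $(t_i,u_i)$ all participating levels collapse to the lowest, so backwards in time a given subset of $k$ out of $b$ current ancestral blocks merges exactly when those $k$ blocks' representative levels are selected (each independently with probability $u_i$) while the others are not. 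Integrating over the intensity gives the rate
\begin{equation}
\int_{(0,1]} u^{k}(1-u)^{b-k}\, u^{-2}\,\Lambda_0(du) = \int_0^1 x^{k-2}(1-x)^{b-k}\,\Lambda(dx) = \lambda_{b,k}, \nonumber
\end{equation}
for $k \geq 2$, which matches the $\Lambda$-coalescent rates exactly (the $\Lambda(\{0\})$ atom contributing only to $k=2$, consistent with the Kingman part). Here I must be careful that the marks $Y_{ik}$ are attached to levels rather than to ancestral blocks, so I would need the exchangeability/consistency of the construction to ensure that selecting a subset of current blocks has the same probability structure as selecting the corresponding levels.

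The step I expect to be the main obstacle is the reversal of time itself: the lookdown dynamics are defined forward, whereas $\Pi^t(s)$ is indexed by time before $t$, so I must justify that the backward ancestor processes $L^t_i(s)$ genuinely form a time-homogeneous Markov process with the stated rates, rather than a more complicated non-Markovian functional. The cleanest route is to invoke the system of stochastic differential equations for the $(L^t_i(s))$ driven by the same Poisson point processes, referenced to Section~5 of \cite{DK99}, and to observe that by the stationarity and independence-of-increments of Poisson processes the reversed process inherits a clean Markovian structure with homogeneous rates. A secondary technical point is handling the infinitely many levels when $\Lambda$ does not come down from infinity: I would address this exactly as in Pitman's framework, defining the dynamics on $\cP_\infty$ through the consistent family of projections $\Pi^t_n(s)$ and checking that the finite-$n$ rates computed above are themselves consistent (the rate $\lambda_{b,k}$ depends on $b$ and $k$ in the manner dictated by Pitman's theorem), so that the projective limit is well-defined and is the $\Lambda$-coalescent started from the singleton partition.
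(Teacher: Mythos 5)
Your proposal is correct in outline, but note that the paper itself offers no proof of this proposition: it is stated as a known consequence of the lookdown construction, with the ancestor-level processes $(L^t_i(s))$ and their driving equations delegated to Section~5 of \cite{DK99}. What you have reconstructed is essentially the standard argument from that literature: time-reverse the driving Poisson noise on $[0,t]$ (which preserves its law, giving time-homogeneity and the Markov property of the backward dynamics), project to $[n]$, and match the merger rates, with the Kingman atom $\Lambda(\{0\})$ contributing rate $\Lambda(\{0\})$ per pair via the processes $\bN_{ij}$ and the multiple-merger part contributing $\int_{(0,1]} u^{k}(1-u)^{b-k}u^{-2}\Lambda_0(du)$, so that the total is $\lambda_{b,k}$; Pitman's characterization then identifies the projective limit. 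Your rate computation is right, and the initial condition is immediate since $L^t_i(0)=i$ forces $\Pi^t(0)$ to be the singleton partition. One refinement: the issue you flag about marks $Y_{ik}$ being attached to levels rather than blocks does not actually require an exchangeability argument. By the paper's Lemma~\ref{lemma_labels}, the ancestor of the block $\pi^t_i(s)$ sits exactly at level $i$, so at backward time $s$ the blocks meeting $[n]$ occupy precisely the lowest levels $1,\dots,b$ (blocks are ordered by least elements, and any block meeting $[n]$ has minimum at most $n$); hence the Bernoulli selection of levels \emph{is} a selection of blocks, deterministically, and participation of levels above $b$ merely shifts labels without affecting the restricted partition. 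Two minor points you should also record to make the sketch airtight: the total rate of events that change the restriction to $[n]$ is finite, since $\int_{(0,1]}\bigl(1-(1-u)^b-bu(1-u)^{b-1}\bigr)u^{-2}\Lambda_0(du)<\infty$ by the integrability hypothesis on $\Lambda$, so the projected process is a genuine jump Markov chain; and the Markov property of $(\Pi^t(s))_s$ is autonomous exactly because, by Lemma~\ref{lemma_labels} again, the configuration of ancestor levels is a function of the partition itself, so no auxiliary level information needs to be carried along.
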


Thus the ancestral coalescent of the $\Lambda$-Fleming-Viot process is a $\Lambda$-coalescent. We define the number of blocks
\begin{equation}
N^t_s:= \# \Pi^t(s)
\end{equation}
and write
\begin{equation}
\Pi^t(s) = (\pi^t_1(s), \pi^t_2(s), \dots, \pi^t_{N^t_s}(s)),
\end{equation}
where the blocks are again ordered by their minimum elements and the sequence is infinite if $N^t_s = + \infty$. The following lemma is proved in \cite{LZ2012}.

\begin{lemma} \label{lemma_labels}
Given $0 \leq s \leq t$, for all $i = 1,\dots, N^t_s$, it holds that $L^t_j(s) = i$ for all $j \in \pi^t_i(s)$.
\end{lemma}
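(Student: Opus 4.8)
The plan is to prove the following equivalent reformulation. For $m \in \N$ write $B_m(s) := \{j \in \N : L^t_j(s) = m\}$ for the set of levels at time $t$ whose ancestor at time $t-s$ occupies level $m$; these sets are exactly the blocks of $\Pi^t(s)$. I will establish the invariant $\mathcal I(s)$: the nonempty sets among $\{B_m(s)\}_{m \geq 1}$ are precisely $B_1(s), \dots, B_{N^t_s}(s)$, and their minima are strictly increasing, $\min B_1(s) < \cdots < \min B_{N^t_s}(s)$. Since the blocks $\pi^t_i(s)$ are, by convention, enumerated in increasing order of minimum element, $\mathcal I(s)$ forces $\pi^t_i(s) = B_i(s)$, i.e. $L^t_j(s) = i$ for every $j \in \pi^t_i(s)$, which is the lemma. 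It thus suffices to prove $\mathcal I(s)$ for every $s \in [0,t]$.

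Two reductions prepare the induction. Reading the lookdown transition rule, at any reproduction event the level of an individual's one-step ancestor never exceeds its current level; hence $s \mapsto L^t_j(s)$ is non-increasing and $L^t_j(s) \leq L^t_j(0) = j$. Consequently the ancestry of the levels $\{1,\dots,n\}$ never leaves $\{1,\dots,n\}$, so this restricted system is self-contained, and by the finiteness of the number of lookdown events affecting a fixed level up to time $t$ it has only finitely many relevant jump times on $[0,t]$. It therefore suffices to prove $\mathcal I(s)$ for the first-$n$-level subsystem for each $n$ and then let $n \to \infty$, using that restriction to $\{1,\dots,n\}$ preserves both the blocks with minimum $\leq n$ and their ordering. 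For the finite subsystem we argue by induction over its finitely many jumps; the base case $s=0$ is immediate since $L^t_j(0)=j$ and $B_m(0)=\{m\}$.

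For the inductive step, suppose a reproduction event occurs at forward time $r=t-s_0$ and $\mathcal I(s_0-)$ holds, with blocks $B_1,\dots,B_N$ (ancestor levels $1,\dots,N$) of strictly increasing minima. Let $\beta \colon \N \to \N$ be the backward relabeling map carrying a post-event level to the pre-event level of its one-step ancestor, read off directly from the transition formula; then $L^t_j(s_0)=\beta(L^t_j(s_0-))$ and hence $B_m(s_0) = \bigcup_{i \in \beta^{-1}(\{m\})} B_i$. Writing $j_0$ for the lowest participating level and $P$ for the set of participating levels, a direct computation shows that $\beta$ fixes each level $\leq j_0$, collapses all of $P$ onto $j_0$, and shifts the non-participating levels above $j_0$ downward order-preservingly. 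From this one extracts two facts: (a) $\beta$ is surjective and maps the initial segment $\{1,\dots,N\}$ onto an initial segment $\{1,\dots,N'\}$ with $N' = N - (\#(P\cap\{1,\dots,N\})-1)$, so the new range is again contiguous; and (b) although $\beta$ itself is not monotone (the higher participating levels drop to $j_0$), the induced lowest-descendant map $m \mapsto \min \beta^{-1}(\{m\})$ is strictly increasing. Since $\beta^{-1}(\{m\})$ is a singleton for $m \neq j_0$ while $\min \beta^{-1}(\{j_0\}) = j_0$, for each $m$ in the new range $\min B_m(s_0) = \min B_{i^*(m)}$ with $i^*(m) = \min \beta^{-1}(\{m\})$; as $i^*$ is increasing by (b) and $i \mapsto \min B_i$ is increasing by hypothesis, the new minima are strictly increasing, giving $\mathcal I(s_0)$.

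The main obstacle is the explicit verification of (a) and (b) for a general multiple merger, with the binary mergers as a special case. The delicate point is precisely that $\beta$ fails to be monotone at the higher participating levels, so one must check that collapsing those levels onto $j_0$ does not disturb the monotonicity of the lowest-descendant map or the contiguity of the range; the conceptual reason is that the reproducing lineage's offspring are inserted above its lowest descendant, so the order of lowest descendants is preserved. The remaining points are routine bookkeeping: confirming that a genuine merger involves at least two of the current $N$ lineages (so that $j_0 \leq N$ and $\min \beta^{-1}(\{j_0\}) = j_0$ indeed lies in the old range), and that the limit $n \to \infty$ recovers the full statement, which is guaranteed by the consistency of the nested subsystems implied by $L^t_j(s) \leq j$.
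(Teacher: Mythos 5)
Your proof is correct: the invariant you propagate---the occupied ancestor levels form an initial segment $\{1,\dots,N^t_s\}$ and the block minima are strictly increasing in the ancestor level---does survive each lookdown event as claimed, since the backward relabeling map $\beta$ fixes levels $\leq j_0$, sends the participating levels to $j_0$, and maps the non-participating levels above $j_0$ order-preservingly onto $\{j_0+1, j_0+2, \dots\}$, which gives both your facts (a) and (b) (the only caveat being the degenerate cases $\#(P\cap\{1,\dots,N\})\leq 1$, where $\beta$ restricts to the identity on the occupied set, so your formula $N'=N-(\#(P\cap\{1,\dots,N\})-1)$ should be read only for genuine mergers, as you note); moreover the reduction to the first-$n$-level subsystem is legitimate because $L^t_j(s)\leq j$ and only events with at least two participants among the first $n$ levels act non-trivially there, of which there are a.s.\ finitely many on $[0,t]$. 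The paper itself gives no proof, deferring to Liu and Zhou \cite{LZ2012}, where the lemma is established by essentially this same induction over lookdown events, so your argument is in substance the one the paper relies on.
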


In other words, the label of a block is equal to the level of its ancestor. Next, we recall that each block has an asymptotic frequency
\begin{equation}
|\pi^t_i(s)| := \lim_{n \to \infty} n^{-1} \sum_{j=1}^n 1(j \in \pi^t_i(s)). \nonumber
\end{equation}
For $\phi \in \cB_b(\R^d)$, $n \geq 1$ and $0 \leq s \leq t$, we define
\begin{equation} \label{def_n_cluster_measure}
Z_{i,s}^{(n)}(t,\phi) = n^{-1} \sum_{j=1}^n 1(j \in \pi^t_i(s)) \phi(X_j(t))
\end{equation}
and
\begin{equation} \label{def_cluster_measure}
Z_{i,s}(t,\phi) = \liminf_{n \to \infty} Z_{i,s}^{(n)}(t,\phi).
\end{equation}
If $\phi \equiv 1$, then the limit exists and is equal to the asymptotic frequency $|\pi^t_i(s)|$. If $\phi(x) = 1_B(x)$ for a Borel set $B \subseteq \R^d$, we will denote the above quantity by $Z_{i,s}(t,B)$. Intuitively, $Z_{i,s}(t,\cdot)$ is the measure associated to the cluster at time $t$ whose common ancestor at time $s$ before $t$ is the level $i$ individual.

We end the section by introducing several $\sigma$-algebras associated to the lookdown model. To encode the behaviour of the Fleming-Viot process and ancestral coalescent under $\cL_\mu^{A,\Lambda}$, we can use either the natural filtrations associated to $Z_t$ and $\Pi^T(t)$ or the filtrations generated by the auxiliary processes, that is, the point and mutation processes. Because our needs are relatively modest, we opt for the former.

Let $\cF^Z_t = \sigma( Z_s : s \leq t)$. Then $(Z_t)_{t \geq 0}$ is a Markov process with respect to the filtration $(\cF^Z_t)_{t \geq 0}$ and we have
\begin{equation}
\cL^{A,\Lambda}_\mu(f(Z_{t + \cdot}) \, | \, \cF^Z_t) = \cL^{A,\Lambda}_{Z_t}(f(Z_\cdot)) \nonumber
\end{equation}
for suitable functions $f$.

For $0\leq s \leq t$, we define $\cF_{t,s}^\Pi = \sigma(\Pi^t(s) : s \leq t)$. $(\cF^\Pi_{t,s})_{s \leq t}$ is then the natural filtration generated by the ancestral coalescent backwards from time $T$. The $\sigma$-algebra $\cF_{t,t}^\Pi$ thus encodes the entire ancestral coalescent from time $T$ back to time $0$. For this special case we adopt the simplified notation $\cF^\Pi_t : = \cF^\Pi_{t,t}$.

We will simply write $\cL^{A,\Lambda}_\mu(\cdot\,| \, X(s))$ when conditioning on the vector $X(s)$ of particle positions at time $s$.

The proof of our main result uses the following basic properties of the lookdown representation.

\begin{lemma} \label{lemma_ancestor_law}
Given any $0\leq s \leq t$, $i, j \in \N$ with $i < j$, and $\mu \in \cM_1$, for any Borel set $B \subset \R^d$,
\begin{equation}
\cL^{A,\Lambda}_\mu( X_j(t) \in B , j \in \pi^t_i(s)\, | \, \cF^\Pi_{t,s}, X(t-s)) = \bP^W_{X_i(t-s)}(W_s \in B) 1(j \in\pi^t_i(s)) \nonumber
\end{equation}
\end{lemma}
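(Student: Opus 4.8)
The plan is to first peel off the $\cF^\Pi_{t,s}$-measurable indicator and then identify the conditional law of the single position $X_j(t)$ as that of a mutation path run for time $s$ from the ancestor's location. Since the event $\{j \in \pi^t_i(s)\}$ is determined by $\Pi^t(s)$, it is $\cF^\Pi_{t,s}$-measurable, so I can factor
\begin{equation}
\cL^{A,\Lambda}_\mu(X_j(t) \in B, j \in \pi^t_i(s) \mid \cF^\Pi_{t,s}, X(t-s)) = 1(j \in \pi^t_i(s))\, \cL^{A,\Lambda}_\mu(X_j(t) \in B \mid \cF^\Pi_{t,s}, X(t-s)). \nonumber
\end{equation}
It therefore suffices to show that, on the event $\{j \in \pi^t_i(s)\}$, the conditional law of $X_j(t)$ is $\bP^W_{X_i(t-s)}(W_s \in \cdot)$. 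By Lemma~\ref{lemma_labels}, on this event $L^t_j(s) = i$, so the ancestor at time $s$ before $t$ of the level-$j$ individual sits at level $i$ and hence occupies position $X_i(t-s)$ at forward time $t-s$.

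Next I would track the position along the genealogical lineage of $X_j(t)$ forward over the interval $[t-s,t]$. The key structural point of the lookdown construction is that positions are copied, not perturbed, at reproduction events: when a level looks down it adopts the current position of a lower level, so the position along any fixed ancestral lineage is continuous across every reproduction event and changes only through mutation. Because the number of reproduction events touching the lineage on $[t-s,t]$ is a.s. finite, the lineage occupies a level $k_\ell$ on each of finitely many disjoint subintervals $I_1,\dots,I_m$ partitioning $[t-s,t]$, and
\begin{equation}
X_j(t) = X_i(t-s) + \sum_{\ell = 1}^m \bigl( W_{k_\ell}(\sup I_\ell) - W_{k_\ell}(\inf I_\ell) \bigr), \nonumber
\end{equation}
where the jump times determining the $I_\ell$ and the levels $k_\ell$ are measurable with respect to $\cF^\Pi_{t,s}$ (via Lemma~\ref{lemma_labels}, the level of the lineage at reverse-time $r\le s$ is the label of the block of $\Pi^t(r)$ containing $j$).

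Finally, I would invoke independence together with the defining property of L\'evy processes. The families of reproduction point processes are independent of the mutation processes $(W_1,W_2,\dots)$, and the increments of the $W_{k}$ over $[t-s,t]$ are independent of $X(t-s)$, which is built only from the initial positions and the mutations and reproduction events up to time $t-s$. Hence, conditionally on $(\cF^\Pi_{t,s}, X(t-s))$, the durations $|I_\ell|$ and the starting location $X_i(t-s)$ are fixed, $\sum_\ell |I_\ell| = s$, and the summands above are independent L\'evy increments over these fixed durations. Since a concatenation of independent L\'evy increments over disjoint time intervals has the law of a single increment of the same L\'evy process over the total duration, the displacement $X_j(t)-X_i(t-s)$ is conditionally distributed as $W_s - W_0$, giving $\cL^{A,\Lambda}_\mu(X_j(t)\in B \mid \cF^\Pi_{t,s}, X(t-s)) = \bP^W_{X_i(t-s)}(W_s \in B)$ on $\{j \in \pi^t_i(s)\}$, as required. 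I expect the main obstacle to be the rigorous bookkeeping in the second and third steps: verifying that the position is genuinely continuous along a lineage across reproduction events (so that no position information leaks beyond the mutation increments), that the decomposition into level-constant subintervals is $\cF^\Pi_{t,s}$-measurable, and that the relevant mutation increments are jointly independent of the conditioning --- all of which rest on the precise driving-noise structure of the countable system defining the lookdown process.
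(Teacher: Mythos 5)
The paper gives no proof of this lemma --- it is stated with the remark that ``both results are elementary and we omit the proofs'' --- so there is no official argument to compare against; your proposal is correct and supplies exactly the standard reasoning the authors intend. All three of the bookkeeping points you flag do check out: positions are copied verbatim at reproduction events in the lookdown construction, so the position along a lineage changes only through mutation; by Lemma~\ref{lemma_labels} the level trajectory $r \mapsto L^t_j(r)$, $r \in [0,s]$, is a function of $(\Pi^t(r))_{r \le s}$ and hence the intervals $I_\ell$ and levels $k_\ell$ are $\cF^\Pi_{t,s}$-measurable; and the post-$(t-s)$ mutation increments are jointly independent of the reproduction noise generating $\cF^\Pi_{t,s}$ and of $X(t-s)$ (the latter by independent increments of the $W_k$), so freezing the coarse-measurable data and summing independent increments of total duration $s$ gives the law of $W_s$. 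Two small points worth making explicit if you write this up: forward in time the level along a lineage is non-decreasing (shifts are always upward and children sit above parents), so the lineage's levels on $[t-s,t]$ are bounded by $j$, which justifies the a.s.\ finiteness of relevant events (mergers involving two of the first $j$ levels occur at finite rate) and makes the $k_\ell$ distinct, so your increments even come from distinct independent processes; and since your conditional law is already measurable with respect to $\cF^\Pi_{t,s} \vee \sigma(X(t-s))$, the tower property transfers the identity from the finer driving-noise $\sigma$-algebra down to the stated conditioning.
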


\begin{lemma}  \label{lemma_indep_ancestors}
For any $0 < s \leq t$ and $i, j \in \N$ with $i \neq j$, $Z_{i,s}(t,\cdot)$ and $Z_{j,s}(t,\cdot)$ are conditionally independent given $\cF^\Pi_{t,s} \vee \cF^Z_{t-s}$.
\end{lemma}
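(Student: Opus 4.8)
The plan is to establish the conditional independence in two stages: first I would condition on the full particle configuration $X(t-s)$ together with the coalescent, and then relax the conditioning down to the coarser $\sigma$-algebra $\cF^\Pi_{t,s}\vee\cF^Z_{t-s}$. Since two random elements of $\cM_f$ are conditionally independent exactly when $\E[f(Z_{i,s}(t,\cdot))\,g(Z_{j,s}(t,\cdot))\mid\cdot]$ factorizes for all bounded measurable functionals $f,g$, and since it suffices to test against functionals of the form $\mu\mapsto\prod_k\langle\phi_k,\mu\rangle$ with $\phi_k\in\cB_b$, I would work at the level of such functionals throughout.

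First I would record the genealogical factorization underlying everything: conditionally on $\cF^\Pi_{t,s}\vee\sigma(X(t-s))$, the present-day configurations $\{X_\ell(t):\ell\in\pi^t_i(s)\}$ of distinct blocks are mutually independent, and the configuration of block $i$ is a measurable function of the ancestral position $X_i(t-s)$ (recall $L^t_\ell(s)=i$ for $\ell\in\pi^t_i(s)$ by Lemma~\ref{lemma_labels}), of the within-block genealogy encoded in $\cF^\Pi_{t,s}$, and of the mutation increments accumulated along the edges of block $i$'s genealogical subtree over $[t-s,t]$. This is the multi-block version of Lemma~\ref{lemma_ancestor_law}: each present-day position is its ancestor's position displaced by the mutation collected along its lineage, and the lineages of two distinct blocks never coalesce on $[0,s]$, so they collect their displacements from disjoint time-segments of the driving L\'evy processes $(W_k)$, which are therefore independent. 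Noting that $Z_{i,s}(t,\cdot)$ is, via \eqref{def_cluster_measure}, a measurable function of the block-$i$ data alone, this yields $\cF^\Pi_{t,s}$-measurable kernels $F_i,G_j$ with
\[
\E\big[f(Z_{i,s}(t,\cdot))\,g(Z_{j,s}(t,\cdot)) \mid \cF^\Pi_{t,s}\vee\sigma(X(t-s))\big] = F_i(X_i(t-s))\,G_j(X_j(t-s)),
\]
where $F_i(x)=\E[f(Z_{i,s}(t,\cdot))\mid\cF^\Pi_{t,s},\,X_i(t-s)=x]$ and $G_j$ is defined analogously.

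To pass to the conditioning on $\cF^\Pi_{t,s}\vee\cF^Z_{t-s}$ I would use two structural facts. First, the ancestral partition $\Pi^t(\cdot)$ on $[0,s]$ is driven by the lookdown reproduction point processes on the forward interval $[t-s,t]$, hence is independent of $\cF^Z_{t-s}$ and of $X(t-s)$. Second, by exchangeability of $(X_k(t-s))_k$ and de Finetti's theorem, the conditional law of $(X_k(t-s))_k$ given $\cF^Z_{t-s}$ is i.i.d.\ with common law $Z_{t-s}$; combined with the first fact, $X_i(t-s)$ and $X_j(t-s)$ remain independent with law $Z_{t-s}$ given $\cF^\Pi_{t,s}\vee\cF^Z_{t-s}$. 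Taking conditional expectations of the displayed identity via the tower property then gives
\[
\E\big[f(Z_{i,s}(t,\cdot))\,g(Z_{j,s}(t,\cdot)) \mid \cF^\Pi_{t,s}\vee\cF^Z_{t-s}\big] = \langle F_i, Z_{t-s}\rangle\,\langle G_j, Z_{t-s}\rangle,
\]
and the same computation applied to $f$ and to $g$ separately identifies the two factors as the respective conditional expectations, which is the claimed conditional independence.

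The main obstacle is the genealogical factorization of the first stage: unlike a branching system, the lookdown construction continually relabels levels, so one must verify that the mutation contributions to two distinct blocks are genuinely drawn from disjoint, and hence independent, portions of the driving processes $(W_k)$, i.e.\ that the level-shifting never forces two blocks to share a mutation increment. The second delicate point is the coarsening step, since conditional independence is not preserved under shrinking the conditioning $\sigma$-algebra in general; it is precisely the de Finetti i.i.d.\ representation, together with the fact that $Z_{i,s}(t,\cdot)$ depends on the initial data only through the single root $X_i(t-s)$, that makes the passage from $\sigma(X(t-s))$ to $\cF^Z_{t-s}$ legitimate.
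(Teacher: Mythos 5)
The paper gives no proof of this lemma at all---it is stated together with Lemma~\ref{lemma_ancestor_law} and dismissed with ``both results are elementary and we omit the proofs''---so there is nothing of the authors' to compare yours against line by line; what can be said is that your proposal is a correct and complete version of the argument they presumably had in mind. The two-stage structure is sound: the inter-block independence given $\cF^\Pi_{t,s}\vee\sigma(X(t-s))$ holds for exactly the reason you isolate, namely that by Lemma~\ref{lemma_labels} (applied at every $r\in[0,s]$) the ancestral level trajectories are measurable functions of the partition path, distinct particles occupy distinct levels at every time, so the displacements accumulated by two disjoint blocks are built from increments of the independent processes $(W_k)$ over disjoint time-level regions, while the driving point processes are independent of the $(W_k)$; and the coarsening step is handled correctly via the tower property together with the independence of the genealogical noise on $(t-s,t]$ from everything up to time $t-s$. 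Two small points deserve sharpening. First, your appeal to ``de Finetti'' for the claim that $(X_k(t-s))_k$ is conditionally i.i.d.\ with law $Z_{t-s}$ given $\cF^Z_{t-s}$ is slightly imprecise: de Finetti gives conditional i.i.d.\ structure given $\sigma(Z_{t-s})$ only, whereas you condition on the full history $\cF^Z_{t-s}$; the stronger statement is true but is a property of the lookdown construction itself (it is part of the Donnelly--Kurtz representation theorems) and should be cited as such. Second, your reduction to product functionals $\mu\mapsto\prod_k\langle\phi_k,\mu\rangle$ is legitimate, but the reason is worth recording: the cluster measures have total mass at most one, so the variables $\langle\phi,Z_{i,s}(t,\cdot)\rangle$ are bounded, their mixed moments determine the joint conditional law, and a monotone class argument then upgrades the moment factorization to genuine conditional independence. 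With those two citations made explicit, your proof stands.
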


Both results are elementary and we omit the proofs.



\section{Proof of main result} \label{s_pf}
We begin with the statements and proofs of several lemmas. First, we fix notation which will be in effect for the remainder of the paper. We continue to write $(W_s)_{s\geq 0}$ to denote a copy of the mutation process, i.e. the L\'evy process with generator $A$ given by \eqref{eq_Levy_generator}. Let $(T_s)_{s > 0}$ denote the corresponding semigroup, which is the strongly continuous contraction semigroup defined as
\begin{equation}
T_s\phi(x) = \E^W_x(\phi(W_s)) \nonumber
\end{equation}
for $x \in \R^d$ and $s>0$.

Let $\Lambda \in \cM_f([0,1])$. We define
\begin{equation}
\sigma = \sigma(\Lambda) = \Lambda([0,1]). \nonumber
\end{equation}
We note that $\sigma = \lambda_{2,2}$, the rate at which the blocks containing a given pair of distinct indices merge in the $\Lambda$-coalescent.

Let $(Z_t)_{t \geq 0}$ be the Fleming-Viot process with mutation process $(W_s)_{s \geq 0}$ and ancestral coalescent encoded by $\Lambda$. 
The following lemma contains standard first and second moment formulae for $Z_t$.

\begin{lemma} \label{lemma_FV_moments}Let $\mu \in \cM_1$.
\noindent (a) For $\phi \in \cB_b$,
\begin{equation}
\E^Z_\mu(\langle \phi, Z_t \rangle) = \langle T_t \phi, \mu \rangle. \nonumber
\end{equation}
\noindent (b) For $\phi, \psi \in \cB_b$,
\begin{align}
\E^Z_\mu(\langle \phi, Z_t \rangle \langle \psi, Z_t \rangle) = \exp(-\sigma t)\langle T_t \phi, \mu \rangle \langle T_t \psi, \mu \rangle + \Big\langle \int_0^t \sigma \exp(-\sigma s)) T_{t-s}(T_s\phi T_s \psi) ds, \mu \Big\rangle. \nonumber
\end{align}
\end{lemma}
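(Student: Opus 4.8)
The plan is to derive both formulae from the martingale problem for the generator $\cA^{\Lambda,A}$ by an interpolation (variation-of-constants) argument that exploits the interplay between the Fleming-Viot flow and the mutation semigroup $T_s$. Fix $t>0$ throughout and regard $Z_s$ as running forward on $[0,t]$.

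For part (a), the key point is that on a single test function $F_\phi(\mu)=\langle\phi,\mu\rangle$ the resampling part of $\cA^{\Lambda,A}$ vanishes (there is no $J\subset[1]$ with $\#J\geq 2$), leaving $\cA^{\Lambda,A}F_\phi(\mu)=\langle A\phi,\mu\rangle$. I would therefore study the time-dependent functional $s\mapsto \langle T_{t-s}\phi, Z_s\rangle$ on $[0,t]$. Using $\tfrac{d}{ds}T_{t-s}\phi=-A\,T_{t-s}\phi$ together with the generator, its drift is $-\langle A\,T_{t-s}\phi,Z_s\rangle+\langle A\,T_{t-s}\phi,Z_s\rangle=0$, so $s\mapsto \E^Z_\mu\langle T_{t-s}\phi,Z_s\rangle$ is constant. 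Comparing $s=0$ (where $Z_0=\mu$) with $s=t$ gives $\langle T_t\phi,\mu\rangle=\E^Z_\mu\langle\phi,Z_t\rangle$.

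For part (b), I would run the same interpolation on the product functional. Set $h(s):=\E^Z_\mu\big[\langle T_{t-s}\phi,Z_s\rangle\,\langle T_{t-s}\psi,Z_s\rangle\big]$ for $s\in[0,t]$. Applying $\cA^{\Lambda,A}$ to $F_{T_{t-s}\phi,\,T_{t-s}\psi}$ and adding the explicit $\partial_s$ term, the two motion contributions cancel against the $s$-derivatives of the semigroups exactly as in (a), and the only surviving term is the $n=2$, $J=\{1,2\}$ merger, of rate $\lambda_{2,2}=\sigma$, equal to $\sigma\big(\langle (T_{t-s}\phi)(T_{t-s}\psi),\mu\rangle-\langle T_{t-s}\phi,\mu\rangle\langle T_{t-s}\psi,\mu\rangle\big)$. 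Invoking part (a) for the single-function moment of the bounded Borel product $(T_{t-s}\phi)(T_{t-s}\psi)$, this yields the linear ODE
\begin{equation}
h'(s)=-\sigma\,h(s)+\sigma\,\big\langle T_s\big((T_{t-s}\phi)(T_{t-s}\psi)\big),\mu\big\rangle. \nonumber
\end{equation}
Solving with integrating factor $e^{\sigma s}$, using $h(0)=\langle T_t\phi,\mu\rangle\langle T_t\psi,\mu\rangle$ and $h(t)=\E^Z_\mu[\langle\phi,Z_t\rangle\langle\psi,Z_t\rangle]$, and finally substituting $r=t-s$ to rewrite $T_s((T_{t-s}\phi)(T_{t-s}\psi))$ as $T_{t-r}((T_r\phi)(T_r\psi))$, produces exactly the claimed expression, with the factor $e^{-\sigma t}$ on the uncoalesced term and $\sigma e^{-\sigma s}$ inside the integral.

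The main obstacle is the regularity bookkeeping. For a general L\'evy generator $A$ and merely bounded Borel $\phi,\psi$, neither $\phi$ nor $T_s\phi$ need lie in $C^2_b$, so the identity $\tfrac{d}{ds}T_{t-s}\phi=-A\,T_{t-s}\phi$ and the application of $\cA^{\Lambda,A}$ to $F_{T_{t-s}\phi,\,T_{t-s}\psi}$ require justification. I would first carry out the computation for $\phi,\psi$ in a core $\cD$ of $A$ (for instance $C^2_b$, or a class on which $A$, $T_s$, and the generator formula act legitimately), where all differentiations are valid, and then extend to arbitrary $\phi,\psi\in\cB_b$ by bounded pointwise approximation. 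Both sides of each identity are continuous under bp-convergence: $T_s$ is a contraction preserving bp-limits by dominated convergence, and $\E^Z_\mu\langle\,\cdot\,,Z_t\rangle$ is bp-continuous by dominated convergence using part (a). This is precisely why establishing (a) for \emph{all} bounded Borel test functions must precede (b): part (a) is applied in the ODE to the product $(T_{t-s}\phi)(T_{t-s}\psi)$, which is only guaranteed to be bounded Borel rather than to lie in the core.
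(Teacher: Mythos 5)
Your proposal is correct, and it takes a genuinely different route from the paper: the paper omits the proof of this lemma altogether, remarking only that the formulae follow by constructing a coalescing function-valued dual process and pointing to Sections~1.12 and 2.8 of Etheridge's book for the case $\Lambda = \delta_0$. Your argument instead extracts the moments directly from the martingale problem via the interpolation $s \mapsto \langle T_{t-s}\phi, Z_s\rangle$ and the resulting linear ODE for $h(s)$; this is the analytic shadow of the same two-particle duality --- the dual pair coalesces at a single exponential clock of rate $\lambda_{2,2} = \sigma$, your factor $e^{-\sigma t}$ is the probability the two lineages have not merged by time $t$, and $\sigma e^{-\sigma s}$ is the density of the coalescence time --- but it is self-contained and avoids setting up the dual-process machinery, at the cost of the regularity bookkeeping you correctly identify (core plus bp-extension, which works since the convolution semigroup $T_s$ preserves $C^2_b$ and both sides of each identity are bp-continuous). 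One point worth making explicit: your merger term replaces $\langle T_{t-s}\phi,\mu\rangle\langle T_{t-s}\psi,\mu\rangle$ by $\langle (T_{t-s}\phi)(T_{t-s}\psi),\mu\rangle$, i.e.\ coalescence merges the test functions into their \emph{product}; this is the correct action (it is what the lookdown event $X_j \mapsto X_i$ induces on $\E[\phi(X_i)\psi(X_j)]$), and it is the only version consistent with the $T_s\phi\, T_s\psi$ term in part (b), even though the paper's displayed definition of $F^J_{\phi_1,\dots,\phi_n}$ literally reads as replacing each $\phi_i$, $i \in J$, by $\phi_{\min J}$, which for $n=2$ would give $\langle \phi_1,\mu\rangle^2$ and could not produce the stated formula --- so you have in effect silently corrected a typo in the paper's generator, which is fine. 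Your observation that (a) must be established for all of $\cB_b$ before running (b) --- because the ODE feeds the product $(T_{t-s}\phi)(T_{t-s}\psi)$, which is merely bounded Borel, back into (a) --- is exactly the right ordering.
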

We omit the proof. These formulae (as well as those for higher moments) can be seen by constructing a coalescing function-valued dual process to $Z_t$. See, for example, Sections~1.12 and 2.8 of the book of Etheridge \cite{Eth00}, where the argument is written in full for the case $\Lambda = \delta_0$.

Suppose that $\phi \in \cB_b^+$ with $\phi(x) \leq M$ for all $x$ for some $M>0$. Then Lemma~\ref{lemma_FV_moments}(b) implies that
\begin{equation} \label{e_FV2momentbd}
\E^Z_\mu(|\langle \phi, Z_t \rangle|^2) \leq \exp(-\sigma t)|\langle T_t \phi, \mu \rangle|^2  + M(1-\exp(-\sigma t)) \langle T_t \phi, \mu \rangle.
\end{equation}

The series of lemmas which follow use the lookdown representation of $Z_t$ to bound below the probability that the mass of $Z_t$ on a given Borel set exceeds some positive value. We consider a general Borel set $B$, but the reader may think of $B$ as being an open ball. 

For the remainder of the work, we will denote the lookdown measure $\cL^{A,\Lambda}_\mu$ simply by $\cL_\mu$. Unless otherwise stated there are no additional assumptions on either the generator $A$ or the measure $\Lambda$.

We recall the definitions \eqref{def_n_cluster_measure} and \eqref{def_cluster_measure} of the cluster measures $Z_{i,s}^{(n)}(t,\cdot)$ and $Z_{i,s}(t,\cdot)$, for $i \in [N^t_s]$, consisting of the individuals in the coalescent block $\pi^t_i(s)$. Recall also from Lemma~\ref{lemma_labels} that the ancestor of $\pi^t_i(s)$ is the individual on level $i$. In the interest of simplifying our statements, hereafter it is implicit that $i \in [N^t_s]$ whenever we discuss $\pi^t_i(s)$, and hence $\pi^t_i(s)$ is a block in the partition $\Pi^t(s)$.

\begin{lemma} \label{lemma_local_cluster_mass}
For any Borel set $B \subseteq \R^d$, $0 < s \leq t$, and $\mu \in \cM_1$, we have
\begin{equation}
\cL_\mu \left(Z_{i,s}(t,B) \geq \frac{p|\pi_i^t(s)|}{2} \, \bigg|\, \cF^\Pi_{t,s}, X(t-s) \right) \geq \frac p 2, \nonumber
\end{equation}
where $p = \bP^W_{X_i(t-s)}(W_s \in B)$.
\end{lemma}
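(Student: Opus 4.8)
Throughout write $q := |\pi_i^t(s)|$ for the asymptotic frequency of the block, and abbreviate the conditioning $\sigma$-algebra by $\cG := \cF^\Pi_{t,s} \vee \sigma(X(t-s))$, so that $p = \bP^W_{X_i(t-s)}(W_s \in B)$ is $\cG$-measurable. The plan is to reduce the estimate to an elementary truncation inequality for conditional expectations and then to identify the conditional mean of the cluster mass. The inequality is: if $X$ is a random variable with $0 \le X \le q$ and $\E[X \mid \cG] \ge pq$, then $\cL_\mu(X \ge pq/2 \mid \cG) \ge p/2$. Indeed, splitting $X$ on $\{X \ge pq/2\}$ and using $X\le q$,
\[
pq \le \E[X\mid\cG] \le q\,\cL_\mu\big(X \ge \tfrac{pq}{2} \,\big|\, \cG\big) + \tfrac{pq}{2},
\]
and rearranging (the case $q=0$ being trivial, since then $X\ge 0 = pq/2$ surely) gives the claim. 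Applying this with $X = Z_{i,s}(t,B)$ is exactly the lemma, so it suffices to verify the two hypotheses: $Z_{i,s}(t,B)\le q$ and $\E[Z_{i,s}(t,B)\mid\cG]\ge pq$.

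First I would note that $Z_{i,s}^{(n)}(t,B) \le n^{-1}\#(\pi_i^t(s)\cap[n]) \to q$, so passing to the liminf gives $0 \le Z_{i,s}(t,B)\le q$. For the conditional mean I would invoke Lemma~\ref{lemma_ancestor_law}: since $1(j\in\pi_i^t(s))$ is $\cG$-measurable, that lemma gives $\E\big[1(j\in\pi_i^t(s))\,1_B(X_j(t))\mid\cG\big] = p\,1(j\in\pi_i^t(s))$, and hence
\[
\E\big[Z_{i,s}^{(n)}(t,B)\mid\cG\big] = p\,n^{-1}\#(\pi_i^t(s)\cap[n]) \longrightarrow pq.
\]
It remains to pass this limit of conditional means through to $Z_{i,s}(t,B) = \liminf_n Z_{i,s}^{(n)}(t,B)$.

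This last passage is where I expect the real difficulty to lie. The obstacle is that conditional Fatou only delivers $\E[\liminf_n Z_{i,s}^{(n)}(t,B)\mid\cG] \le pq$, which is the \emph{wrong} direction; the same is true of the complementary estimate through $1_{B^c}$ and of reverse Fatou, so a one-sided estimate cannot suffice. What is genuinely needed is that the a.s. limit of $Z_{i,s}^{(n)}(t,B)$ \emph{exists}; once it does, the uniform bound $0\le Z_{i,s}^{(n)}(t,B)\le 1$ lets bounded convergence upgrade the display above to the desired identity $\E[Z_{i,s}(t,B)\mid\cG] = pq$. To secure existence of the limit I would appeal to the exchangeability of the subfamily: for fixed $t$ and $s$, the positions $(X_j(t): j\in\pi_i^t(s))$, re-indexed in increasing order, form an exchangeable sequence — this is the sampling-consistency of the Donnelly--Kurtz lookdown and is compatible with the conditional independence of distinct clusters in Lemma~\ref{lemma_indep_ancestors} — so de Finetti's theorem (equivalently the strong law for exchangeable sequences) gives a.s. convergence of $\#(\pi_i^t(s)\cap[n])^{-1}\sum_{j\le n,\,j\in\pi_i^t(s)}1_B(X_j(t))$, and multiplying by $n^{-1}\#(\pi_i^t(s)\cap[n])\to q$ yields a.s. convergence of $Z_{i,s}^{(n)}(t,B)$.

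It is worth emphasizing why the first-moment route is the right one: the limiting cluster mass is in general a nondegenerate random variable rather than the constant $pq$, because leaves of the subfamily sharing recent genealogy over $[t-s,t]$ inherit common mutation increments and are positively correlated. Consequently a second-moment (Paley--Zygmund) argument would face an off-diagonal term that need not decouple, and in the worst case would only produce the bound $p/4$ rather than $p/2$. The truncation inequality above, which requires only the conditional mean together with the a priori ceiling $X\le q$, therefore circumvents exactly this difficulty, and the whole proof comes down to the existence of the subfamily empirical limit.
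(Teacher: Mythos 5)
Your proposal is correct and follows the same route as the paper's proof: compute the conditional first moment of $Z^{(n)}_{i,s}(t,B)$ from Lemma~\ref{lemma_ancestor_law}, pass to the limit in $n$, and finish with a truncation argument. Your two-line truncation inequality (if $0\le X\le q$ and $\E[X\mid\cG]\ge pq$ then the conditional probability of $\{X\ge pq/2\}$ is at least $p/2$) is exactly the paper's final step, where the authors split $Z_{i,s}(t,B)\le \frac{p|\pi^t_i(s)|}{2}1_{E^c}+|\pi^t_i(s)|1_E$ and rearrange to get $\frac p2\left(1-\frac p2\right)^{-1}\ge \frac p2$; your handling of the degenerate case $|\pi^t_i(s)|=0$ matches theirs as well.

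The one place you diverge is the limit passage, and there your extra care is warranted. The paper's proof says ``taking the limit infimum of both sides, and applying Fatou's Lemma'' to conclude the lower bound
\begin{equation}
\cL_\mu\left(Z_{i,s}(t,B)\,\middle|\,\cF^\Pi_{t,s},X(t-s)\right)\ \geq\ p\,|\pi^t_i(s)|, \nonumber
\end{equation}
but, as you observe, with the $\liminf$ definition \eqref{def_cluster_measure} conditional Fatou yields exactly the reverse inequality ($\le p\,|\pi^t_i(s)|$), and reverse Fatou on the complement lower-bounds only the conditional mean of the $\limsup$. So the paper's step, read literally, runs in the wrong direction, and it is repaired by precisely what you supply: almost sure existence of $\lim_n Z^{(n)}_{i,s}(t,B)$, after which bounded convergence upgrades the display to the equality $\cL_\mu(Z_{i,s}(t,B)\mid\cF^\Pi_{t,s},X(t-s))=p\,|\pi^t_i(s)|$ and the truncation argument proceeds unchanged. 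Your justification of existence — conditional exchangeability of the positions indexed by the block, then de Finetti and the exchangeable strong law — is the standard fact in the lookdown setting (it is also what makes the full empirical quantity $Z(t,B)$ well defined for a fixed Borel set), and asserting it without proof is on par with the paper's own treatment of Lemmas~\ref{lemma_ancestor_law} and~\ref{lemma_indep_ancestors}, which it states as elementary and omits. Your closing observation that a second-moment/Paley--Zygmund route is unnecessary and lossy is also consistent with the paper, which uses no second moments of cluster masses here. In short: same architecture, with your version being more careful (and strictly speaking more correct) at the single analytic step where the paper's invocation of Fatou is too quick.
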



\begin{proof}
We first observe that $p$ is measurable with respect to $X(t-s)$ and so the statement makes sense. Without loss of generality we assume that $|\pi_i(t)| > 0$, as the result holds trivially if $|\pi^t_i(s)| = 0$. By Lemma~\ref{lemma_ancestor_law}, for $j \in \N$ (and assuming $i \in [N^t_s]$),
\begin{equation}
\cL_\mu (X_j(t) \in B, j \in \pi^t_i(s) | \cF^\Pi_{t,s}, X(t-s)) = \bP^W_{X_i(t-s)}(W_{s} \in B) 1(j \in \pi^t_i(s)). \nonumber
\end{equation}
It follows that
\begin{equation}
\cL_\mu (Z^{(n)}_{i,s}(t,B) \, | \, ) = \bP^W_{X_i(t-s)}(W_s \in B) \, Z^{(n)}_{i,s}(t,1). \nonumber
\end{equation}
Taking the limit infimum of both sides, and applying Fatou's Lemma and the fact that $Z^{(n)}_i(t,1)$ converges to $\pi_i(t)$, we obtain that
\begin{equation} \label{e_localmassaux1}
\cL_\mu (Z_{i,s}(t,B) \, | \, \cF^\Pi_{t,s}, X(t-s)) \geq \bP^W_{X_i(t-s)}(W_s \in B) |\pi^t_i(s)|.
\end{equation}
Let $E = \{Z_{i,s}(t,B) \geq p |\pi^t_i(s)|/2\}$. Then
\begin{equation}
Z_{i,s}(t,B) \leq \frac{p |\pi^t_i(s)|}{2} 1_{E^c} + |\pi^t_i(s)| 1_{E}. \nonumber
\end{equation}
Taking the conditional expectation of both sides of the above and applying \eqref{e_localmassaux1} gives
\begin{equation}
p |\pi_i^t(s)| \leq  \frac{p |\pi^t_i(s)|}{2}(1 - \cL_\mu(E \, | \, \cF^\Pi_{t,s}, X(t-s)))+ |\pi^t_i(s)|\cL_\mu (E \, | \, \cF^\Pi_{t,s}, X(t-s)). \nonumber
\end{equation}
Rearranging the above, we have
\begin{equation}
\cL_\mu(E \, | \, \cF^\Pi_{t,s}, X(t-s)) \geq \frac{p}{
2}\left(1 - \frac p 2 \right)^{-1} \geq \frac p 2, \nonumber
\end{equation}
which proves the result.
\end{proof}

For $B \subset \R^d$ and $\epsilon > 0$, we define the $\epsilon$-enlargement $B_\epsilon := \{x \in \R^d : d(x,B) < \epsilon\}$, where $d(x,B) := \inf_{y \in B} |x-y|$. Equivalently, $B_\epsilon = \{x + y : x \in B, y \in B(0,\epsilon)\}$, so $B_\eps$ is clearly open. For $t,\epsilon>0$, we define
\begin{equation} \label{def_pdeltaeps}
p(t,\epsilon) = \bP^W_0(|W_t|< \epsilon).
\end{equation}

To state the next lemma, we introduce some notation. If $(\Pi(s))_{s \geq 0}$ is a $\Lambda$-coalescent, for $s>0$ and $b \in [0,1]$, we denote by $N_s(b)$ the number of blocks in $\Pi(s)$ with asymptotic frequency greater than or equal to $b$. That is, recalling that $\Pi(s) = (\pi_1(s), \dots, \pi_{N_s}(s))$,
\begin{equation}
N_t(b) = \# \{ i \in [N_t] : |\pi_i(t)| \geq b\}. \nonumber
\end{equation}
Note that $N_t(0) = N_t$, the number of blocks in the coalescent. If $(\Pi^t(s))_{s \in [0,t]}$ is the ancestral coalescent backwards from time $t$ in the lookdown construction, we denote the analogous quantity by $N^t_s(b)$. That is, for $0 \leq s \leq t$, 
\begin{equation}
N^t_s(b) := \# \{ i \in [N^t_s] : |\pi_i^t(s)| \geq b \}. \nonumber
\end{equation}


\begin{lemma} \label{lemma_clusterestimate}
For any $\mu \in \cM_1$, $t, \epsilon > 0$, $b \in (0,1/2]$ and Borel $B \subset \R^d$, we have
\begin{equation}
\cL_\mu ( Z(t, B_\epsilon) \geq  b\cdot p(t,\epsilon)\, | \, \cF^\Pi_t) \geq 1 - \left(1 -  \frac{\mu(B) \cdot p(t,\epsilon)}{2} \right)^{N^t_t(2b)}
\end{equation}
\end{lemma}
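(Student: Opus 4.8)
The plan is to specialise the two cluster lemmas to the terminal backward time $s=t$. At $s=t$ the ancestor of the block $\pi_i^t(t)$ is the level-$i$ individual at forward time $0$, so its position is $X_i(0)$; under $\cL_\mu$ these initial positions are i.i.d.\ samples from $\mu$ and are independent of the coalescent $\sigma$-algebra $\cF^\Pi_t$. The strategy is to show that each sufficiently large block independently has a decent chance of depositing mass at least $b\,p(t,\epsilon)$ into $B_\epsilon$, and then to combine these chances over the $N^t_t(2b)$ blocks of frequency at least $2b$ using the conditional independence of clusters.

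For the first step I would apply Lemma~\ref{lemma_local_cluster_mass} with $s=t$ and with $B_\epsilon$ in place of $B$, obtaining, with $p_i := \bP^W_{X_i(0)}(W_t \in B_\epsilon)$,
\[
\cL_\mu\Big(Z_{i,t}(t,B_\epsilon) \ge \tfrac{p_i\,|\pi_i^t(t)|}{2} \,\Big|\, \cF^\Pi_t,\, X(0)\Big) \ge \tfrac{p_i}{2}.
\]
I would then bound $p_i$ from below on the event $\{X_i(0)\in B\}$: by spatial homogeneity of the L\'evy process $W$, if the starting point $X_i(0)$ lies in $B$ then $|W_t - X_i(0)| < \epsilon$ forces $W_t \in B_\epsilon$, whence $p_i \ge \bP^W_0(|W_t|<\epsilon) = p(t,\epsilon)$. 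Restricting to blocks with $|\pi_i^t(t)|\ge 2b$ (which exist only because $b\le 1/2$ gives $2b\le 1$), the threshold $\tfrac12 p_i|\pi_i^t(t)|$ dominates $b\,p(t,\epsilon)$, so on $\{X_i(0)\in B\}$ the event $G_i:=\{Z_{i,t}(t,B_\epsilon)\ge b\,p(t,\epsilon)\}$ has conditional probability, given $\cF^\Pi_t$ and $X(0)$, at least $p(t,\epsilon)/2$.

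Next I would average out $X(0)$. Since $X_i(0)\sim\mu$ is independent of $\cF^\Pi_t$, the tower property yields, for every block with $|\pi_i^t(t)|\ge 2b$,
\[
\cL_\mu(G_i \mid \cF^\Pi_t) \ge \cL_\mu\big(G_i\cap\{X_i(0)\in B\} \,\big|\, \cF^\Pi_t\big) \ge \tfrac{p(t,\epsilon)}{2}\,\mu(B).
\]
Because the clusters partition the total mass, $Z(t,B_\epsilon)\ge Z_{i,t}(t,B_\epsilon)$ for every $i$, so the target event contains $\bigcup_i G_i$ over the $N^t_t(2b)$ large blocks. Finally, Lemma~\ref{lemma_indep_ancestors} with $s=t$ (where $\cF^Z_0$ is trivial) makes the cluster measures, hence the events $G_i$, conditionally independent given $\cF^\Pi_t$, so
\[
\cL_\mu\big(Z(t,B_\epsilon)\ge b\,p(t,\epsilon) \mid \cF^\Pi_t\big) \ge 1 - \prod_i \cL_\mu(G_i^c\mid \cF^\Pi_t) \ge 1 - \Big(1-\tfrac{\mu(B)\,p(t,\epsilon)}{2}\Big)^{N^t_t(2b)}.
\]

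The delicate point is the layered conditioning: the per-block estimate is most naturally produced after conditioning on the initial positions $X(0)$, yet the product over blocks requires a factorisation at the coarser level $\cF^\Pi_t$. These are reconciled precisely because the $X_i(0)$ are i.i.d.\ and independent of $\cF^\Pi_t$, so averaging over $X(0)$ leaves a genuine $\cF^\Pi_t$-conditional bound while preserving the cross-block independence supplied by Lemma~\ref{lemma_indep_ancestors}. I would also make sure that the pairwise conditional independence in Lemma~\ref{lemma_indep_ancestors} is invoked in its mutual form across all $N^t_t(2b)$ blocks, and verify the elementary set inclusion on $\{X_i(0)\in B\}$ that manufactures the decoupled constant $p(t,\epsilon)$; everything else is routine.
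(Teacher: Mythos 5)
Your proposal is correct and takes essentially the same route as the paper's proof: a per-block lower bound from Lemma~\ref{lemma_local_cluster_mass} at $s=t$, the i.i.d.\ initial positions (independent of $\cF^\Pi_t$) supplying the factor $\mu(B)$, the observation $p_i \ge p(t,\epsilon)$ on $\{X_i(0)\in B\}$ combined with $|\pi_i^t(t)|\ge 2b$ to reach the threshold $b\,p(t,\epsilon)$, and the conditional independence of clusters (Lemma~\ref{lemma_indep_ancestors}, rightly invoked in its mutual form, with $\cF^Z_0$ trivial) to multiply over the $N^t_t(2b)$ large blocks. The only cosmetic difference is bookkeeping: the paper decouples the two ingredients into separate events $\{x_{i_j}\in B\}$ and $\{Z_{i_j,t}(t,B(x_{i_j},\epsilon))\ge p(t,\epsilon)|\pi_{i_j}|/2\}$ made independent by translation invariance, whereas you keep the position-dependent probability $p_i$ and average over $X(0)$ by the tower property --- the same estimate either way.
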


\begin{proof}
We define the event
\begin{equation}
E = \{Z(t,B_\epsilon) \geq b \cdot p(t,\epsilon)\}.
\end{equation}
We condition on $\cF^\Pi_t$, with respect to which $N^t_t(2b)$ is obviously measurable. To simplify notation we will write $N = N^t_t(2b)$. Suppose that the $N$ blocks of $\Pi^t(t)$ with mass at least $2b$ are $\{\pi^t_{i_1}(t),\dots,\pi^t_{i_N}(t)\}$ for some indices $i_1, \dots, i_N$. We hereafter omit the time parameter and write $\pi_{i_j}$ for $j\in [N]$. For each $j \in [N]$, let $x_{i_j} = X_{i_j}(0)$, that is, $x_{i_j}$ is the location of the ancestor of $\pi_{i_j}$, and define the events
\begin{equation}
E^1_j = \{x_{i_j} \in B\} \nonumber
\end{equation}
and
\begin{equation}
E^2_j = \{Z_{i_j,t}(t,B(x_{i_j},\epsilon)) \geq p(t,\epsilon) |\pi_{i_j}| / 2 \}, \nonumber
\end{equation}
where $B(x_{i_j},\epsilon)$ is the open ball of radius $\epsilon$ centered at $x_{i_j}$. The events $E^1_1,\dots,E^1_N, E^2_1,\dots, E^2_N$ are pairwise independent given $\cF^\Pi_t$. To see this, we first note that $E^1_1,\dots,E^1_N$ are independent because the vector of initial particle positions (from which $x_{i_j}$ are taken) is a vector of i.i.d. samples from $\mu$. By translation invariance of the spatial motions, the event $E^2_j$ is independent of the value of $x_{i_j}$, and hence of $E^1_j$, and it is trivially independent of $E^1_{j'}$ for $j' \neq j$. Finally, $E^2_1,\dots,E^2_N$ are independent by Lemma~\ref{lemma_indep_ancestors}.

Next, we argue that if $E_j^1 \cap E_j^2$ occurs for any $j \in [N]$, then $E$ occurs. To see this, we first note that on $E_j^1$ we have $B(x_{i_j}, \epsilon) \subseteq B_\epsilon$. Hence, if $E^2_j$ occurs as well, we have
\begin{align}
Z_{i_j, t}(t, B_\epsilon) \geq p(t,\epsilon) |\pi_{i_j}|/2 \geq p(t,\epsilon) b.  \nonumber
\end{align}
The claim then follows because $Z(t,B_\epsilon) \geq Z_{i_j,t}(t,B_\epsilon)$. We therefore obtain that
\begin{align} \label{e_clusterlemma1}
\cL_\mu (E \, | \, \cF^{\Pi}_t) &\geq \cL_\mu ( \cup_{j=1}^N E^1_j \cap E^2_j \, | \, \cF^\Pi_t ) \nonumber
\\ &= 1 - \prod_{j=1}^N (1 - \cL_\mu(E^1_j \cap E^2_j \, | \, \cF^\Pi_t)) \nonumber
\\ &= 1 - (1- \cL_\mu (E^1_1 \cap E^2_1 \, | \, \cF^\Pi_t))^N,
\end{align}
where the second line uses conditional independence and the third line follows because the events have the same probability for each $j \in [N]$. (In the case of $E^2_j$, this is due to translation invariance of the spatial motion.) The point $x_{i_1}$ is a random variable with distribution $\mu$, and hence $\cL_\mu (E^1_1 \, | \, \cF^\Pi_t) = \mu(B)$. To bound the probability of $E^2_1$ below, we apply Lemma~\ref{lemma_local_cluster_mass} directly. By translation invariance, this yields
$\cL_\mu(E^2_1 \, | \, \cF^\Pi_t) \geq p(t,\epsilon)/2$, where we recall the definition of $p(t,\eps)$ from \eqref{def_pdeltaeps}. Applying independence again and using these bounds, we obtain from \eqref{e_clusterlemma1} that
\begin{align}
\cL_\mu(E \, | \, \cF^\Pi_t) &\geq 1 - \left(1 - \frac{ \mu(B) p(t, \epsilon)}{2}\right)^N, \nonumber
\end{align}
completing the proof.
\end{proof}




\begin{lemma} \label{lemma_masslwrbd}
Given any $0 < \delta < t_0$, $\mu \in \cM_1$ and $B \subseteq \R^d$, define $m = T_{t_0-\delta} \mu (B)$. Then for any $\eps > 0$ and $b \in (0,1/2]$, we have
\begin{equation} \label{e_lemma_masslwrdbd}
\bP^Z_\mu \left( Z(t_0,B_\epsilon) \geq b \cdot p(\delta,\epsilon) \right) \geq \frac{\E^\Pi(1 - e^{-\theta})}{8(\sigma+1)} \left[ 1 \wedge \frac{m}{ t_0-\delta} \right],
\end{equation}
where $\theta = m + \sigma (t_0-\delta) p(\delta,\epsilon) N_\delta(2b)$, and $\Pi$ is the ancestral coalescent of $Z$.
\end{lemma}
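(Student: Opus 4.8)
The plan is to run the process separately over $[0,t_0-\delta]$ and $[t_0-\delta,t_0]$ and to read off the two summands of $\theta$ from these two phases. Write $q=p(\delta,\epsilon)$, $W=Z_{t_0-\delta}(B)$, $N=N_\delta(2b)$ and $s=\sigma(t_0-\delta)$. First I would apply the Markov property at time $t_0-\delta$ together with Lemma~\ref{lemma_clusterestimate} (with initial measure $Z_{t_0-\delta}$, time parameter $\delta$, and the fresh $\Lambda$-coalescent over $[t_0-\delta,t_0]$, which is independent of $\cF^Z_{t_0-\delta}$) to obtain
\begin{equation}
\bP^Z_\mu\big(Z(t_0,B_\epsilon)\geq bq\big)\;\geq\;\E\Big[1-\big(1-\tfrac{Wq}{2}\big)^{N}\Big],\nonumber
\end{equation}
where $W$ and $N$ are independent. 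By Lemma~\ref{lemma_FV_moments}(a) and \eqref{e_FV2momentbd} (with $\phi=1_B$, $t=t_0-\delta$, $M=1$) I record the moment bounds $\E W=m$ and $\E W^2\leq m(m+s)$, and note $0\leq W\leq 1$, $0\leq q\leq 1$.

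Next I would isolate the $\sigma(t_0-\delta)qN$ contribution. Since $Wq/2\leq 1/2$ we have $1-(1-\tfrac{Wq}{2})^N\geq 1-e^{-NWq/2}$, and the elementary inequality $1-e^{-xy}\geq(1-e^{-x})(1\wedge y)$ (valid for $x,y\geq 0$, by concavity in $y$) applied with $x=(m+s)qN$ and $y=\tfrac{W}{2(m+s)}$ gives
\begin{equation}
1-e^{-NWq/2}\;\geq\;\big(1-e^{-(m+s)qN}\big)\Big(1\wedge\tfrac{W}{2(m+s)}\Big)\;\geq\;\big(1-e^{-sqN}\big)\Big(1\wedge\tfrac{W}{2(m+s)}\Big).\nonumber
\end{equation}
Taking expectations and using independence, the truncated second-moment estimate $\E[W\wedge 2(m+s)]\geq m-\tfrac{\E W^2}{2(m+s)}\geq m/2$, and the arithmetic inequality $\tfrac{m}{m+s}\geq\tfrac{1}{\sigma+1}\big(1\wedge\tfrac{m}{t_0-\delta}\big)$, I would conclude
\begin{equation}
\bP^Z_\mu\big(Z(t_0,B_\epsilon)\geq bq\big)\;\geq\;\frac{1}{4(\sigma+1)}\Big(1\wedge\tfrac{m}{t_0-\delta}\Big)\,\E^\Pi\big(1-e^{-\sigma(t_0-\delta)qN}\big).\nonumber
\end{equation}

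The remaining $m$ contribution is the crux, since the previous display vanishes on $\{N=0\}$ while $\theta$ retains the summand $m$ there. I would establish the complementary bound $\bP^Z_\mu(Z(t_0,B_\epsilon)\geq bq)\geq\tfrac{1}{4(\sigma+1)}(1\wedge\tfrac{m}{t_0-\delta})(1-e^{-m})$ by a Paley--Zygmund positivity argument: applying Paley--Zygmund to $W=Z_{t_0-\delta}(B)$ with the above moments yields $\bP(W\geq m/2)\geq\tfrac{m}{4(m+s)}\geq\tfrac{1}{4(\sigma+1)}(1\wedge\tfrac{m}{t_0-\delta})$, which supplies the $q$-free prefactor, and it then remains to show that mass of order $m$ in the fixed set $B$ at time $t_0-\delta$ propagates, over the interval of length $\delta$, to at least $bq=b\,p(\delta,\epsilon)$ mass in $B_\epsilon$ with conditional probability $\gtrsim 1-e^{-m}$. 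Finally, since both lower bounds are for the same probability, it exceeds their maximum and hence at least half their sum; combined with $1-e^{-\theta}\leq(1-e^{-m})+(1-e^{-\sigma(t_0-\delta)qN})$ (equivalent to $(1-e^{-m})(1-e^{-sqN})\geq 0$) this produces exactly the factor $\tfrac{1}{8(\sigma+1)}$.

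The hard part will be the propagation estimate in the third step. The difficulty is that the threshold $bq$ and any single-cluster localization both scale with $q=p(\delta,\epsilon)$, whereas the target contribution $1-e^{-m}$ is $q$-free; thus the single-block bound coming directly from Lemma~\ref{lemma_clusterestimate} (which is $O(q)$ for small $q$) is insufficient, and one must instead capture the contribution of the full spread of mass rather than of one dominant cluster. This is precisely where the lack of infinite divisibility of the Fleming--Viot process is felt and where the lookdown/ancestral decomposition must substitute for a branching-type independence argument.
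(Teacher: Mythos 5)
Your first half is correct and is, in essence, the paper's own argument: the Markov property at $t_0-\delta$, Lemma~\ref{lemma_clusterestimate} integrated under $\E^Z_\mu\otimes\E^\Pi$, the bound $1-(1-Wq/2)^N\geq 1-e^{-NWq/2}$ (with $q=p(\delta,\eps)$, $W=Z(t_0-\delta,B)$, $N=N_\delta(2b)$, $s=\sigma(t_0-\delta)$ as in your notation), and then a linearization combined with the truncated second-moment estimate from Lemma~\ref{lemma_FV_moments} and \eqref{e_FV2momentbd}. Your inequality $1-e^{-xy}\geq(1-e^{-x})(1\wedge y)$ is a clean repackaging of the paper's step ``$1-e^{-x}\geq x\theta^{-1}(1-e^{-\theta})$ for $x\in[0,\theta]$,'' and had you kept $x=(m+s)qN$, $y=W/(2(m+s))$ without discarding the $m$ inside the exponent, your own computation would give
\[
\bP^Z_\mu\big(Z(t_0,B_\eps)\geq bq\big)\;\geq\;\frac{\E^\Pi\big(1-e^{-(m+s)qN}\big)}{4(\sigma+1)}\Big(1\wedge\tfrac{m}{t_0-\delta}\Big),
\]
which is exactly what the paper proves: in its proof the value of $\theta$ is \emph{chosen} to be $[m+\sigma(t_0-\delta)]\,p(\delta,\eps)\,N_\delta(2b)$, i.e.\ the product form, and the additive expression $\theta=m+\sigma(t_0-\delta)p(\delta,\eps)N_\delta(2b)$ in the lemma statement is a slip (missing brackets). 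The ``crux'' you identify is therefore an artifact of reading the stated $\theta$ literally: by weakening $(m+s)qN$ to $sqN$ and then trying to restore an additive $m$, you manufactured a sub-problem the paper never faces.

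That sub-problem is where your proposal has a genuine gap: the complementary bound $\bP^Z_\mu(Z(t_0,B_\eps)\geq bq)\geq\frac{1}{4(\sigma+1)}(1\wedge\frac{m}{t_0-\delta})(1-e^{-m})$ is asserted, not proved. Paley--Zygmund only supplies the $q$-free prefactor $\bP(W\geq m/2)\gtrsim m/(m+s)$; the remaining claim --- that mass of order $m$ in $B$ at time $t_0-\delta$ produces mass $\geq b\,p(\delta,\eps)$ in $B_\eps$ at time $t_0$ with conditional probability that is $q$-free --- is precisely what you flag as hard, and your own diagnosis of why it is hard is accurate: every single-cluster estimate available (Lemmas~\ref{lemma_local_cluster_mass} and \ref{lemma_clusterestimate}) carries a factor of order $q$, and on $\{N_\delta(2b)=0\}$ there are no ancestral blocks of frequency $\geq 2b$ to carry the required mass at all. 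Indeed, for $b$ near $1/2$ and a near-deterministic regime (small $\sigma$, so that $Z_{t_0-\delta}(B)$ concentrates near $m<2b$), it is unclear that the additive-$\theta$ version of the lemma is even true uniformly in $b$, so no repair of this step should be attempted. None of this matters downstream: in the proof of Theorem~\ref{thm_main} only the bound $\theta_n\geq\frac{\sigma\delta_n}{12}N_{\delta_n/2}(2b_n)$ is used, so your bound with $\E^\Pi(1-e^{-\sigma(t_0-\delta)qN})$ alone already suffices for every application. Deleting the final third of your argument and taking $x=(m+s)qN$ in your own elementary inequality yields a complete proof of the lemma as intended, with the marginally sharper constant $4(\sigma+1)$ in place of $8(\sigma+1)$.
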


\begin{proof}
We begin with the conditional probability of the desired event given $\cF_{t_0 -\delta}$ and apply the Markov property at time $t_0 -\delta$, yielding
\begin{align} \label{e_masslemmaaux00}
\bP^Z_\mu(Z(t_0,B_\eps) \geq b \cdot p(\delta,\eps) \, | \, \cF_{t_0 - \delta}) &= \E^Z_{Z_{t_0 - \delta}}  (Z(\delta,\Sigma_\eps) \geq b \cdot p(\delta,\eps))\nonumber
\\ &= \cL_{Z_{t_0 - \delta}}(Z(\delta,B_\eps) \geq b \cdot p(\delta,\eps)).
\end{align}
The second line simply realizes the Fleming-Viot process via a lookdown construction. We estimate the quantity above by conditioning on the ancestral coalescent.

Let $\tilde{\mu} \in \cM_1$ and let $Y$ denote a version of the conditional probability of $\{Z(\delta,B_\eps) \geq b\cdot p(\delta,\eps)\}$ given $\cF^\Pi_\delta$ with respect to the measure $\cL_{\tilde \mu}$. Then by Lemma~\ref{lemma_clusterestimate} we have
\begin{equation}
Y \geq 1 - \left( 1 - \frac{\tilde{\mu}(B) \cdot p(\delta,\eps)}{2}\right)^{N^\delta_\delta(2b)}, \nonumber
\end{equation}
where we recall that $N_\delta^\delta(2b)$ denotes the number of blocks in $\Pi^\delta(\delta)$ with asymptotic frequency at least $2b$. It then follows that
\begin{align}
\cL_\nu(Z(\delta,B_\eps) \geq b \cdot p(\delta,\eps)) &= \cL_{\tilde{\mu}}(Y) \nonumber
 \\ & \geq  \cL_{\tilde{\mu}} \bigg(  1 - \left( 1 - \frac{\tilde{\mu}(B) \cdot p(\delta,\eps)}{2}\right)^{N^\delta_\delta(2b)} \bigg) \nonumber
 \\ & =  \E^\Pi \bigg(1 - \left( 1 - \frac{\tilde{\mu}(B) \cdot p(\delta,\eps)}{2}\right)^{N_\delta(2b)} \bigg). \nonumber
\end{align}
In the last line, the expectation under $\cL_{\tilde{\mu}}$ can be written as an expectation with respect to $\E^\Pi$ because the only randomness is through the ancestral coalescent (via $N^\delta_\delta(2b)$). Because this holds for any $\tilde{\mu} \in \cM_1$, we may apply it in \eqref{e_masslemmaaux00} to obtain
\begin{equation}
\bP^Z_\mu(Z(t_0,B_\eps) \geq b \cdot p(\delta,\eps) \, | \, \cF_{t_0 -\delta}) \geq \E^\Pi \bigg(1 - \left( 1 - \frac{Z(t_0 - \delta,B) \cdot p(\delta,\eps)}{2}\right)^{N_\delta(2b)} \bigg). \nonumber
\end{equation}
Finally, we take the expectation of both sides under $\E^Z_\mu$ to obtain
\begin{equation}\label{e_masslemmaaux11}
\bP^Z_\mu(Z(t_0,B_\eps) \geq b \cdot p(\delta,\eps)) \geq \E^Z_\mu \otimes \E^\Pi \bigg(1 - \left( 1 - \frac{Z(t_0 - \delta,B) \cdot p(\delta,\eps)}{2}\right)^{N_\delta(2b)} \bigg),
\end{equation}
where $\E^Z_\mu \otimes \E^\Pi$ denotes the expectation with respect to the product of the two measures. To simplify notation we will hereafter write $N = N_\delta(2b)$. The elementary inequality $(1-x)^n \leq e^{-nx}$ for all $x \in [0,1]$ and $n\in \N$ implies that
\begin{equation}\label{e_masslemmaaux1}
1 -  \left(1 - \frac{ Z(t_0 -\delta, B)  p(\delta, \epsilon)}{2}\right)^N \geq 1 -  \exp \left(- \frac{N  Z(t_0 - \delta, B)  p(\delta, \epsilon)}{2}\right).
\end{equation}

The next step is the approximation $1 - e^{-x} \approx x$, but we need to keep track of constants. For any $\theta > 0$, $1 - e^{-x} \geq x \theta^{-1} (1 - e^{-\theta})$ for all $ x\in [0,\theta]$. Let $\theta>0$. By restricting to the event where the exponent has absolute value less than $\theta$, it follows that
\begin{align} \label{e_masslemmaaux2}
&1 - \exp \left(- \frac{N Z(t_0 - \delta, B) p(\delta, \epsilon)}{2}\right) \nonumber
\\ &\hspace{1 cm} \geq \frac{(1-e^{-\theta}) N p(\delta,\epsilon) }{2 \theta}   Z(t_0-\delta,B) 1\left(Z(t_0-\delta,B)  \leq \frac{2 \theta}{N p(\delta, \epsilon)}\right)
\end{align}
To obtain a lower bound on the terms involving $Z(t_0 - \delta, B)$, we proceed as follows:
\begin{align}
&Z(t_0-\delta,B) 1\left(Z(t_0-\delta,B)  \leq \frac{2 \theta}{N p(\delta, \epsilon)}\right) \nonumber
\\ &\hspace{1 cm}= Z(t_0-\delta,B)\left[1 - 1 \left(\frac{Z(t_0-\delta,B) \, N  p(\delta,\eps) }{2 \theta} > 1 \right)\right] \nonumber
\\ &\hspace{1 cm}\geq Z(t_0 -\delta,B) -  Z(t-\delta,B)^2 \frac{\, N  p(\delta,\epsilon)}{2 \theta}. \nonumber
\end{align}
From Lemma~\ref{lemma_FV_moments}(a) and the definition of $m$, we have $m = \E^Z_\mu(Z(t_0-\delta,B)) = T_{t_0-\delta} \mu (B)$. For the second moment, the estimate \eqref{e_FV2momentbd} implies
\begin{equation} \label{e_masslemma_2ndmoment}
\E^Z_\mu ( Z(t_0-\delta,B)^2) \leq e^{-\sigma(t_0 - \delta)} m^2 + (1 - e^{-\sigma(t_0 - \delta)}) m \leq m^2 + \sigma(t_0 - \delta) m.
\end{equation}
Given $N$, taking the expectation on both sides of the previous inequality, this moment estimate yields
\begin{align} \label{e_masslemmaaux3}
&\E^Z_\mu \left(Z(t_0-\delta,B) 1\left(Z(t_0-\delta,B)  \leq \frac{2 \theta}{N p(\delta,\eps)}\right)\right) \nonumber
\\ &\hspace{1 cm}\geq m - \left[e^{-\sigma (t_0-\delta)} m^2 + (1-e^{-\sigma (t_0-\delta)}) m \right] \frac{N p(\delta,\eps)}{2\theta} \nonumber
\\ &\hspace{1 cm}\geq m - m [m + \sigma(t_0-\delta)]  \frac{N p(\delta,\eps)}{2\theta}.
\end{align}
We now choose the value of $\theta$ to be
\[\theta = [m + \sigma (t_0 - \delta)]	p(\delta,\epsilon) N\]
and observe the cancellations this choice will cause in the last expression of \eqref{e_masslemmaaux2} and in \eqref{e_masslemmaaux3}. We combine \eqref{e_masslemmaaux1}, \eqref{e_masslemmaaux2} and \eqref{e_masslemmaaux3} to obtain
\begin{align}
\E^Z_\mu \otimes \E^\Pi \bigg( 1 - \left(1 - \frac{ Z(t_0 -\delta, B) p(\delta, \epsilon)}{2}\right)^{N} \bigg) &\geq  \E^\Pi \bigg( \frac{(1 - e^{-\theta})(m - m/2)}{2[m+ \sigma(t_0 - \delta)]} \bigg) \nonumber
\\ &\geq \frac{ \E^\Pi(1 - e^{-\theta})}{8(\sigma + 1)} \left[ 1 \wedge \frac{m}{t_0 - \delta}\right]. \nonumber
\end{align}
By \eqref{e_masslemmaaux11}, this completes the proof. \end{proof}


Recall that $(T_s)_{s > 0}$ is the semigroup associated to $W_s$, the Markov process with generator $A$, which has form
\begin{equation}\label{Levy_generator}
A\phi(x) =  \nabla \cdot (Q\nabla \phi)(x) + a \cdot \nabla \phi(x) +  \int_{\R^d} \left( \phi(x+y) - \phi(x) - \nabla \phi(x) y 1_{\{|y| < 1\}}\right) \nu(dy), 
\end{equation}
where $a$, $Q$ and $\nu$ are as described in the introduction. We write $T_s^*$ to denote the adjoint of $T_s$. It is standard that $(T_s^*)_{s > 0}$ is a Markov semigroup and its generator, $A^*$, generates a L\'evy process we denote $W^*_s$ which is the time reversal of $W_s$. In particular, we have
\begin{equation} \label{eq_adjointgenerator}
A^*\phi(x) = \nabla \cdot (Q \nabla \phi)(x) - b \cdot \nabla \phi(x) + \int_{\R^d} [ \phi(x-y) - \phi(x) + \nabla \phi(x) y 1_{(0,1]}(|y|)] \nu(dy).
\end{equation}
For details, see Section II.1 of Bertoin \cite{Bertoin96}.

We recall that L\'evy processes are Feller, and in particular $(T_s)_{s > 0}$ and $(T_s^*)_{s>0}$ are strongly continuous contraction semigroups on the Banach space $C_0$ of continuous functions which vanish at infinity equipped with the topology of uniform convergence. (See Proposition I.5 of \cite{Bertoin96}.)

We hereafter adopt the convention that $\infty \cdot 0 := 0$. The convolution of two measures $\mu$ and $\nu$ is denoted $\mu * \nu$.
\begin{lemma} \label{lemma_liminfgenerator}
Given any $t,\eps>0$, bounded $B \subset \R^d$, and positive, strictly decreasing sequences $(\delta_n)_{n \geq 1}$ and $(\epsilon_n)_{n \geq 1}$ converging to $0$, with probability one we have
\begin{equation}
\liminf_{n \to \infty} \eps_n^{-1} T_{\eps_n} Z_{t-\delta_n} (B_\epsilon) \geq (\nu * Z_t)(B_{\epsilon}) + \infty \cdot Z_t (B_{\epsilon}). \nonumber
\end{equation}
\end{lemma}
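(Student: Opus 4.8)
The plan is to rewrite the quantity as an integral and split according to whether $Z_t(B_\epsilon)$ is positive or zero. Writing $T_{\eps_n} Z_{t-\delta_n}(B_\epsilon) = \int \bP^W_x(W_{\eps_n} \in B_\epsilon)\, Z_{t-\delta_n}(dx)$, the first point is that, since $Z$ has c\`adl\`ag paths and a fixed deterministic time is almost surely not a jump time, one has $Z_{t-\delta_n} \to Z_{t-} = Z_t$ weakly, $\bP^Z_\mu$-almost surely, as $\delta_n \downarrow 0$. This weak convergence, exploited through the portmanteau theorem (in its lower-semicontinuity form), is what allows us to pass to the limiting measure $Z_t$ on the right-hand side.

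For the case $Z_t(B_\epsilon) > 0$ I would show that $\liminf_n \eps_n^{-1} T_{\eps_n} Z_{t-\delta_n}(B_\epsilon) = +\infty$, matching the term $\infty \cdot Z_t(B_\epsilon)$. By inner regularity choose a compact $K \subset B_\epsilon$ with $Z_t(K) > 0$ and set $\rho = d(K, B_\epsilon^c) > 0$; then $U := \{x : d(x,K) < \rho/2\}$ is open, satisfies $Z_t(U) > 0$, and every $x \in U$ has $d(x, B_\epsilon^c) > \rho/2$. By translation invariance of the motion, $\bP^W_x(W_{\eps_n} \in B_\epsilon) \geq \bP^W_0(|W_{\eps_n}| < \rho/2) = p(\eps_n, \rho/2)$ (recall \eqref{def_pdeltaeps}) for all $x \in U$, so
\[
\eps_n^{-1} T_{\eps_n} Z_{t-\delta_n}(B_\epsilon) \geq \eps_n^{-1} p(\eps_n, \rho/2)\, Z_{t-\delta_n}(U).
\]
Since $U$ is open, portmanteau gives $\liminf_n Z_{t-\delta_n}(U) \geq Z_t(U) > 0$, while $p(\eps_n,\rho/2) \to 1$ forces $\eps_n^{-1} p(\eps_n, \rho/2) \to +\infty$, yielding the claim.

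For the case $Z_t(B_\epsilon) = 0$ the term $\infty \cdot Z_t(B_\epsilon)$ vanishes and I must show the limit inferior dominates $(\nu * Z_t)(B_\epsilon)$. Fix auxiliary parameters $\eta, \gamma \in (0,1)$ and use the L\'evy--It\^o decomposition $W = \hat W + \hat J$, where $\hat J$ is the compound Poisson process of jumps of magnitude $\geq \eta$ (rate $\lambda_\eta = \nu(\{|y| \geq \eta\}) < \infty$, jump law $\nu_\eta/\lambda_\eta$ with $\nu_\eta := \nu(\cdot \cap \{|y| \geq \eta\})$) and $\hat W$ the independent remainder. Writing $(B_\epsilon)_{-\gamma} := \{z : d(z, B_\epsilon^c) > \gamma\}$, which is open and satisfies $(B_\epsilon)_{-\gamma} + B(0,\gamma) \subset B_\epsilon$, I restrict to the event that $\hat J$ makes exactly one jump $\Delta$ in $[0,\eps_n]$ with $x + \Delta \in (B_\epsilon)_{-\gamma}$ and that $|\hat W_{\eps_n}| < \gamma$; on this event $W_{\eps_n} = x + \Delta + \hat W_{\eps_n} \in B_\epsilon$, and by independence
\[
\bP^W_x(W_{\eps_n} \in B_\epsilon) \geq \eps_n\, e^{-\eps_n \lambda_\eta}\, \bP(|\hat W_{\eps_n}| < \gamma)\, \nu_\eta\big((B_\epsilon)_{-\gamma} - x\big).
\]
Dividing by $\eps_n$ and integrating against $Z_{t-\delta_n}$ gives a factor $e^{-\eps_n\lambda_\eta}\bP(|\hat W_{\eps_n}| < \gamma) \to 1$ times $(\nu_\eta * Z_{t-\delta_n})((B_\epsilon)_{-\gamma})$. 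Because $\nu_\eta$ is finite and $(B_\epsilon)_{-\gamma}$ open, the map $x \mapsto \nu_\eta((B_\epsilon)_{-\gamma} - x)$ is bounded and lower semicontinuous, so portmanteau yields $\liminf_n (\nu_\eta * Z_{t-\delta_n})((B_\epsilon)_{-\gamma}) \geq (\nu_\eta * Z_t)((B_\epsilon)_{-\gamma})$. Thus $\liminf_n \eps_n^{-1} T_{\eps_n} Z_{t-\delta_n}(B_\epsilon) \geq (\nu_\eta * Z_t)((B_\epsilon)_{-\gamma})$ for every $\eta,\gamma$, and letting $\eta \downarrow 0$ (so $\nu_\eta \uparrow \nu$, using $\nu(\{0\}) = 0$) and then $\gamma \downarrow 0$ (so $(B_\epsilon)_{-\gamma} \uparrow B_\epsilon$), monotone convergence upgrades the bound to $(\nu * Z_t)(B_\epsilon)$.

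The main obstacle is the second case: producing a lower bound on $\bP^W_x(W_{\eps_n} \in B_\epsilon)$ that is \emph{uniform in the starting point} $x$ (so that it survives integration against the $n$-dependent measure $Z_{t-\delta_n}$) while still recovering the full jump contribution $\nu$. The ``one big jump plus small remainder'' decomposition is what delivers this, and the interplay of the two limiting procedures — the short-time limit $\eps_n \to 0$ and the weak limit $Z_{t-\delta_n} \to Z_t$ — must be managed through lower semicontinuity together with the truncations $\eta,\gamma$, which are removed only at the very end by monotone convergence.
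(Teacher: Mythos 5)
Your proof is correct, but it takes a genuinely different route from the paper's in the main (jump) part of the argument. The paper argues analytically through the adjoint semigroup: it writes $\eps_n^{-1}T_{\eps_n}Z_{t-\delta_n}(B_\eps)=\eps_n^{-1}\langle T^*_{\eps_n}1_{B_\eps},Z_{t-\delta_n}\rangle$, splits the integral over $B_\eps$ and $B_\eps^c$, treats the first piece essentially as you do (a uniform lower bound on the transition kernel near the set plus portmanteau, with an inner approximation $\eps'\uparrow\eps$ playing the role of your compact $K$ and $\rho/2$-collar), and for the second piece sandwiches $1_{B_{\eps'}}\leq\varphi\leq 1_{B_\eps}$ with a smooth compactly supported $\varphi$ whose derivatives vanish off $B_\eps$, so that strong continuity of $(T^*_s)_{s>0}$ on $C_0$ gives $\eps_n^{-1}T^*_{\eps_n}\varphi\to A^*\varphi$ uniformly on $B_\eps^c$, where $A^*\varphi$ reduces to the pure-jump integral. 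You replace this functional-analytic input with the elementary one-big-jump bound $\bP^W_x(W_{\eps_n}\in B_\eps)\geq \eps_n e^{-\eps_n\lambda_\eta}\,\bP(|\hat{W}_{\eps_n}|<\gamma)\,\nu_\eta\bigl((B_\eps)_{-\gamma}-x\bigr)$, which is uniform in $x$, followed by portmanteau (valid because $x\mapsto\nu_\eta((B_\eps)_{-\gamma}-x)$ is bounded and lower semicontinuous, $(B_\eps)_{-\gamma}$ being open and $\nu_\eta$ finite) and two monotone passages $\eta\downarrow 0$, $\gamma\downarrow 0$; since every step after fixing the a.s. event $\{Z_{t-}=Z_t\}$ is a deterministic consequence of weak convergence, the random choices of $K,\rho,U$ and the case split on $\{Z_t(B_\eps)>0\}$ cause no measurability issues (your Case 2 bound in fact holds in both cases, so the split is purely organizational). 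Your route buys several things: it avoids the bump function, the generator domain, and strong continuity on $C_0$ altogether; it never uses boundedness of $B$, which the paper needs to construct $\varphi\in C^\infty_c$; and it works directly with the forward kernel $\int\bP^W_x(W_{\eps_n}\in B_\eps)\,Z_{t-\delta_n}(dx)$, which is the form in which the lemma is actually invoked in the main proof via Lemma \ref{lemma_FV_moments}(a) --- thereby sidestepping the $W$-versus-$W^*$ (equivalently, $\nu$ versus its reflection) bookkeeping that the paper's adjoint computation passes through and treats somewhat casually in its final display. What the paper's route buys in exchange is robustness beyond spatial homogeneity: arguing through the generator does not use translation invariance, so it would adapt to Feller motions with a state-dependent jump kernel, whereas both your L\'evy--It\^o splitting and the uniform bounds via $p(\eps_n,\cdot)$ rely on $W$ being a L\'evy process.
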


\begin{proof}
First we recall that $t \to Z_t$ is c\`adl\`ag in the weak topology, and hence the (weak) left limit $Z_{t-} = \lim_{s \uparrow t}Z_s$ exists for all $t>0$. Furthermore, the process has no fixed time discontinuities, so given $t>0$ we have $Z_{t-} = Z_t$ almost surely. In particular, if $U \subseteq \R^d$ is open and $t>0$ is fixed, then $\lim_{s \uparrow t} Z_s(U) = Z_t(U)$ almost surely (in the weak topology). We use this fact several times in what follows. 

Let $t,\eps,B,(\delta_n)_{n\geq 1}$ and $(\eps_n)_{n \geq 1}$ be as in the statement of the lemma. We decompose $\eps_n^{-1} T_{\eps_n} Z_{t-\delta_n} (B_\epsilon)$ by writing
\begin{align} \label{e_liminfsemigroup1}
\eps_n^{-1} T_{\eps_n} Z_{t-\delta_n} (B_\epsilon) &= \eps_n^{-1} \langle 1_{B_\eps}, T_{\eps_n} Z_{t-\delta_n} \rangle \nonumber
\\ &= \eps_n^{-1} \langle T_{\eps_n}^* 1_{B_\eps},  Z_{t-\delta_n} \rangle \nonumber
\\ &= \int_{B_\epsilon^c} \epsilon_n^{-1} T_{\epsilon_n}^* 1_{B_{\epsilon}}(x) Z_{t-\delta_n}(dx) + \int_{B_{\eps}} \epsilon_n^{-1} T_{\epsilon_n}^* 1_{B_{\epsilon}}(x) Z_{t-\delta_n}(dx).
\end{align}
Consider the second term in \eqref{e_liminfsemigroup1}. For $\eps ' \in (0,\eps)$ and $x \in B_{\eps'}$, we have $B(x, \eps - \eps') \subset B_\epsilon$, and hence
\begin{equation}
T_{\eps_n}^* 1_{B_\eps}(x) = \bP^{W^*}_x(W^*_{\eps_n} \in B_\eps) \geq \bP^{W^*}_0(W_{\eps_n}^* \in B(0,\eps - \eps')). \nonumber
\end{equation}
Since the right hand side of the above converges to $1$ as $\epsilon_n \to 0$, it follows that $T_{\eps_n}^* 1_{B_\eps} \to 1$ uniformly on $B_{\eps'}$. Consequently, $T_{\eps_n}^* 1_{B_\eps} (x) \geq \frac 1 2$ for all $x \in B_{\eps'}$ for sufficiently large $n$. We conclude that
\begin{align}
\liminf_{n \to \infty} \eps_n^{-1} \int_{B_{\eps}} T_{\eps_n}^* 1_{B_\eps} (x) Z_{t-\delta_n} (dx) \geq \infty \cdot \int_{B_{\eps'}} Z_{t}(dx) \,\, \text{ a.s.}, \nonumber
\end{align}
for every $\eps' \in (0,\eps)$, since $Z_{t-\delta_n} \to Z_t$ weakly and hence $\liminf_{n \to \infty} Z_{t-\delta_n}(B_{\eps'}) \geq Z_t(B_{\eps'})$. By inner regularity (continuity of probability) of $Z_t$, taking $\epsilon' \uparrow \eps$ gives
\begin{align}\label{e_liminfsemigroup2}
\liminf_{n \to \infty} \eps_n^{-1} \int_{B_{\eps}} T_{\eps_n}^* 1_{B_\eps} (x) Z_{t-\delta_n} (dx) \geq \infty \cdot Z_t(B_{\eps}) \,\, \text{ a.s.}
\end{align}

We now consider the first term in the right hand side of \eqref{e_liminfsemigroup1}. Since $B$ is bounded, for $\epsilon' \in (0,\epsilon)$ we can choose a non-negative function $\varphi \in C^\infty_c$ satisfying
\begin{equation}
1_{B_{\eps'}} \leq \varphi \leq 1_{B_\eps} \nonumber
\end{equation}
such that $\varphi$, $D\varphi$ and $D^2 \varphi$ vanish on $B_\eps^c$. ($D \varphi$ and $D^2 \varphi$ denote the gradient and Hessian of $\varphi$, respectively.) Note that we also have
\begin{equation} \label{eq_semigroupinequality}
T_{\eps_n}^* 1_{B_{\eps'}} \leq T_{\eps_n}^*  \varphi \leq T_{\eps_n}^*  1_{B_\eps}.
\end{equation}
Since $\varphi$ is in the domain of $\cA^*$ (viewed as an operator on $C_0$), we have
\begin{equation}
\eps_n^{-1} (T_{\eps_n}^*\varphi - \varphi) \to A^* \varphi  \nonumber
\end{equation}
uniformly as $n \to \infty$. However, because $\varphi(x) = 0$ for $x \in B_\eps^c$, this implies
\begin{equation}
\eps_n^{-1} T_{\eps_n}^*\varphi\to A^* \varphi \quad \text{uniformly on} \,\, B_\eps^c. \nonumber
\end{equation}
It follows from \eqref{eq_semigroupinequality}, the above, and the weak convergence of $Z_{t-\delta_n}$ to $Z_t$ that
\begin{align}
\liminf_{n\to \infty} \int_{B_\eps^c} \eps_n^{-1} T_{\eps_n}^* 1_{B_\eps}(x) Z_{t-\delta_n}(dx) & \geq \int_{B_\eps^c} A^* \varphi(x) Z_t(dx). \nonumber
\end{align}
Because $\varphi$, $D\varphi$ and $D^2 \varphi$ vanish on $B_\eps^c$, we obtain from \eqref{eq_adjointgenerator} that
\begin{equation}
A^*\varphi(x) = \int_{\R^d} \varphi(x-y) \nu(dy) \nonumber
\end{equation}
for all $x \in B_\eps^c$. We thus obtain that
\begin{align}
\liminf_{n\to \infty} \int_{B_\eps^c} \eps_n^{-1} T_{\eps_n}^* 1_{B_\eps}(x) Z_{t-\delta_n}(dx) &\geq \int_{B_\eps^c} \int_{\R^d} \varphi(x-y) \nu(dy) Z_t(dx) \nonumber
\\& \geq \int_{B_\eps^c} \int_{\R^d} 1_{B_{\eps'}}(x-y) \nu(dy) Z_t(dx). \nonumber
\end{align}
Taking $\epsilon' \uparrow \epsilon$ and applying monotone convergence, we obtain
\begin{align}
\liminf_{n\to \infty} \int_{B_\eps^c} \eps_n^{-1} T_{\eps_n} 1_{B_\eps} Z_{t-\delta_n}(dx) &\geq  \int_{B_\eps^c} \int_{\R^d} 1_{B_{\eps}}(x - y) \nu(dy) Z_t(dx). \nonumber
\end{align}
Combined with \eqref{e_liminfsemigroup1} and \eqref{e_liminfsemigroup2}, this yields
\begin{align}
\liminf_{n \to \infty} \epsilon_n^{-1} \langle T_{\epsilon_n} 1_{B_\epsilon} , Z_{t-\delta_n}\rangle \geq \int_{B_\eps^c}\int_{\R^d} 1_{B_\eps}(x-y) \nu(dy) Z_t(dx) + \infty \cdot Z_t(B_\eps). \nonumber
\end{align}
To obtain the desired result, we observe that
\begin{equation}
\int_{B_\eps} \int_{\R^d} 1_{B_\eps}(x-y) \nu(dy) Z_t(dx) > 0 \Rightarrow Z_t(B_\eps) > 0, \nonumber
\end{equation}
and hence
\begin{align}
&\int_{B_\eps^c}\int_{\R^d} 1_{B_\eps}(x-y) \nu(dy) Z_t(dx) + \infty \cdot Z_t(B_\eps) \nonumber
\\ &\hspace{1 cm}= \int_{B_\eps^c}\int_{\R^d} 1_{B_\eps}(x-y) \nu(dy) Z_t(dx) + \infty \cdot Z_t(B_\eps) + \int_{B_\eps} \int_{\R^d} 1_{B_\eps}(x-y) \nu(dy) Z_t(dx) \nonumber
\\ &\hspace{1 cm}= (\nu * Z_t)(B_\eps) + \infty \cdot Z_t(B_\eps), \nonumber
\end{align}
which completes the proof.
\end{proof}

The last lemma we need to prove our main result is the following result that allows us to handle general $\Lambda$-coalescents. In order to state it, we review the notion of dust. An exchangeable integer partition is said to have {\it dust} if a positive proportion of the integers belong to singleton blocks. In fact, all blocks with asymptotic frequency equal to zero are singleton blocks, and an exchangeable partition has singleton blocks if and only if a positive proportion of the integers belong to singleton blocks. (See Proposition 2.8 of Bertoin \cite{Bertoin_coag}.) Thus, for $\Pi(t)$, having no dust implies that $\lim_{b\to 0^+} N_t(b) = N_t$. (Note that this holds if $N_t < \infty$ or $N_t = + \infty$.) Hereafter, we say that the $\Lambda$-coalescent has no dust if $\Pi(t)$ has no dust a.s. for all $t>0$, in which case we have $\lim_{b \to 0^+} N_t(b) = N_t$ a.s. for all $t>0$. Note that any $\Lambda$-coalescent that comes down from infinity does not have dust; a necessary and sufficient condition for a $\Lambda$-coalescent to have no dust is given in Theorem~8 of \cite{P1999}.

Recall the function $v(t)$ defined in \eqref{def_v}.


\begin{lemma} \label{lemma_specialsequence} Let $\Lambda \in \cM_f([0,1])$ be such that the $\Lambda$-coalescent a.s. has no dust. Then for any $a \in (0,1)$, there exist positive sequences $(\delta_n)_{n \geq 1}$ and $(b_n)_{n \geq 1} \subset [0,1/2]$ and a constant $c>0$ such that $\delta_n \to 0$ and $b_n \to 0$ as $n\to \infty$ and the following hold:\begin{itemize}
\item $\delta_{n+1} \leq a \delta_n$ for all $n \in \N$.
\item $\bP^\Pi(N_{\delta_n /2}( 2 b_n) \geq c \delta_n^{-1}) \geq 1/4$ for all $n \in \N$.
\item $n \to b_n v(\delta_{n+1})$ is increasing and $b_n v(\delta_{n+1}) \geq n$ for all $n \in \N$.
\end{itemize}
\end{lemma}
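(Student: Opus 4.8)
The plan is to build the two sequences by a recursion that alternates between choosing $\delta_n$ and choosing $b_n$, making each choice as small as necessary to meet the relevant condition. The only substantive input from coalescent theory is that $N_t$ is at least of order $t^{-1}$ with probability bounded away from $0$ as $t\to0$, so I would isolate that first. \textbf{Claim:} there are constants $c>0$ and $\delta_*>0$ so that $\bP^\Pi(N_{\delta/2}\ge c\,\delta^{-1})\ge 1/3$ for all $\delta\in(0,\delta_*)$. If the coalescent stays infinite this is immediate, since $N_s=\infty$ a.s.\ for every $s>0$. If it comes down from infinity I would invoke Theorem~\ref{theorem_blocks}: because $N_s/v(s)\to 1$ a.s.\ as $s\to 0$, the indicators $1\{N_s\ge v(s)/2\}$ tend to $1$ a.s., so by bounded convergence $\bP^\Pi(N_s\ge v(s)/2)\to 1$ as $s\to0$; combining this with the global bound $v(s)\ge c_\Lambda s^{-1}$ from \eqref{e_blocknum_bd} and taking $s=\delta/2$ yields the Claim with $c=c_\Lambda$ and $\delta_*$ small.

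Next, the no-dust hypothesis is precisely what upgrades a bound on $N_{\delta/2}$ to one on $N_{\delta/2}(2b)$. Fix $\delta\in(0,\delta_*)$. Since the coalescent has no dust, $N_{\delta/2}(2b)\uparrow N_{\delta/2}$ a.s.\ as $b\downarrow 0$, so the events $\{N_{\delta/2}(2b)\ge c\,\delta^{-1}\}$ increase to $\{N_{\delta/2}\ge c\,\delta^{-1}\}$; by continuity of probability from below together with the Claim,
\[ \lim_{b\downarrow 0}\bP^\Pi\bigl(N_{\delta/2}(2b)\ge c\,\delta^{-1}\bigr)=\bP^\Pi\bigl(N_{\delta/2}\ge c\,\delta^{-1}\bigr)\ge 1/3>1/4. \]
Consequently, for each $\delta\in(0,\delta_*)$ there exists $b>0$, which may be taken as small as we please, with $\bP^\Pi(N_{\delta/2}(2b)\ge c\,\delta^{-1})\ge 1/4$. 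This freedom to shrink $b$ without destroying the block-count estimate is what drives the recursion.

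Finally I would construct the sequences by alternating these two observations. Pick $\delta_1\in(0,\delta_*)$. Given $\delta_n\in(0,\delta_*)$, use the previous paragraph to choose $b_n\in(0,1/2]$ so small that $\bP^\Pi(N_{\delta_n/2}(2b_n)\ge c\,\delta_n^{-1})\ge 1/4$ (the second bullet) and also $b_n\le\min\{b_{n-1},1/n\}$, which forces $b_n\downarrow 0$. Then, with $b_n$ now fixed, choose $\delta_{n+1}\le a\delta_n$ (the first bullet, which also gives $\delta_n\to 0$ since $a<1$) and small enough that $b_n\,v(\delta_{n+1})\ge \max\{n,\ b_{n-1}v(\delta_n)\}$, which delivers the third bullet.

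The one place where the conditions genuinely conflict—and hence the main point of the argument—is this last choice: driving $b_n$ down for the block-count estimate pushes the product $b_n\,v(\delta_{n+1})$ down, yet the third bullet demands that this product grow. The resolution is the ordering of the choices. In the coming-down case $v(\delta_{n+1})\to\infty$ as $\delta_{n+1}\downarrow 0$ (again by $v(t)\ge c_\Lambda t^{-1}$), so selecting $\delta_{n+1}$ \emph{after} $b_n$ lets $v(\delta_{n+1})$ be taken large enough to overwhelm the small factor $b_n$; in the staying-infinite case $v\equiv\infty$, so $b_n\,v(\delta_{n+1})=\infty$ and the third bullet holds automatically (non-decreasing and $\ge n$) under the convention $b\cdot\infty=\infty$ for $b>0$. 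Thus the alternating construction renders all three requirements simultaneously satisfiable, with the constant $c$ from the Claim serving as the constant in the statement.
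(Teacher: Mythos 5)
Your proof is correct and takes essentially the same route as the paper's: the same block-count estimate $\bP^\Pi(N_{\delta}\geq c\,\delta^{-1})$ bounded below via Theorem~\ref{theorem_blocks} and \eqref{e_blocknum_bd}, the same no-dust upgrade $N_{\delta/2}(2b)\uparrow N_{\delta/2}$ as $b\downarrow 0$ to select $b^{*}(\delta)$, and the same alternating recursion (choose $b_n$ from $\delta_n$, then $\delta_{n+1}$ small enough that $v(\delta_{n+1})$ overwhelms $b_n$). If anything you are slightly more explicit than the paper, which leaves the monotonicity of $n\mapsto b_n v(\delta_{n+1})$, the requirement $b_n\to 0$, and the staying-infinite case (dismissed there as ``trivial otherwise'') implicit.
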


\begin{proof}
We assume that $v(\delta) < \infty$ for all $\delta>0$, as (under the assumption that $\Pi_t$ has no dust) the lemma is trivial otherwise. Theorem~\ref{theorem_blocks} implies that $N_\delta / v(\delta) \to 1$ in distribution as $\delta \to 0$. Hence there exists $\delta_0 > 0$ such that if $0< \delta \leq \delta_0$, then $N_\delta \geq v(\delta) / 2$ with probability at least $1/2$. By \eqref{e_blocknum_bd}, there is a constant $c>0$ such that $v(\delta) \geq c\delta^{-1}$. Hence
\begin{equation}
\bP^\Pi( N_\delta \geq  c \delta^{-1} ) \geq \frac 1 2  \nonumber
\end{equation}
for any $\delta \leq \delta_0$. We also have
\begin{equation}
\lim_{b \to 0^+} N_\delta(b) = N_\delta \nonumber
\end{equation}
almost surely for any $\delta > 0$, since $\Pi(t)$ does not have dust. Consequently,
\begin{equation}
\lim_{b \to 0^+} \bP^\Pi ( N_\delta(b) \geq  c \delta^{-1}/2 ) \geq 1/2. \nonumber
\end{equation}
It follows that, given $\delta \leq \delta_0$ there exists $b^*(\delta)>0$ such that
\begin{equation} \label{e_sequencelemmaaux}
\bP^\Pi (N_\delta( 2b^*(\delta)) \geq  c \delta^{-1}/2) \geq 1/4.
\end{equation}
Now fix $\delta_1 < \delta_0$ and choose $b_1 = b^*(\delta_1/2)$. Since $\lim_{s \to 0^+} v(s) = \infty$, we can choose $\delta_2 \in (0, a \delta_1]$ to be sufficiently small so that $v(\delta_2) b_1 \geq 2$. We then choose $b_2 = b^*(\delta_2/2)$. We continue the construction sequentially, always taking $b_n = b^*(\delta_n / 2)$ and choosing $\delta_{n+1} \in (0, a \delta_n]$ such that $v(\delta_{n+1}) b_{n} \geq n+1$, which guarantees that $\delta_{n+1} \leq a \delta_n$ and $v(\delta_{n+1}) b_n \to \infty$. This also implies that $v(\delta_n) \to \infty$, and hence $\delta_n \to 0$ as $n \to \infty$. The fact that $b_n = b^*(\delta_n/2)$ for all $n$ ensures that $N_{\delta_n/2}( 2 b_n) \geq  c \delta_n^{-1}$ with probability at least $1/4$ for all $n$ by \eqref{e_sequencelemmaaux}, and the proof is complete.
\end{proof}

We now prove Theorem~\ref{thm_main} under the assumption that $\Pi(t)$ does not have dust. (This is the important case, as indeed the proof is very simple when $\Pi(t)$ has dust. We discuss this afterwards.) The idea of the proof is as follows. For an open ball $B$, we use Lemmas~\ref{lemma_masslwrbd} and \ref{lemma_liminfgenerator} to show that $Z_{t-\delta_{n+1}}(B) \geq b_n$ infinitely often a.s. on $\{\nu * Z_t(B) > 0\}$, where $\delta_n$ and $b_n$ are chosen as in Lemma~\ref{lemma_specialsequence}. We then estimate $Z_t(B)$ conditionally given $Z_{t-\delta_{n+1}}$; the corresponding ancestral representation of $Z_t$ has order $v(\delta_{n+1})$ ancestors, so when $Z_{t-\delta_{n+1}}(B) \geq b_n$, the expected number of ancestors originating in $B$ is of order at least $b_n v(\delta_{n+1})$, which goes to infinity as $n \to \infty$. Each cluster originating in $B$ locally generates mass (conditionally) independently with some positive probability. Therefore, the probability that a cluster from time $t-\delta_{n+1}$ originates in $B$ \textit{and} generates mass in $B$ at time $t$ converges can be shown to converge to $1$ using the estimates developed before.

\begin{proof}[Proof of Theorem~\ref{thm_main}] We fix $t>0$ and $\mu \in \cM_1$. Let $B \subset \R^d$ be bounded. For the time being we allow an arbitrary set, but we will consider $B_\eps$ for $\eps > 0$ and the reader may think of $B_\eps$ as an open ball. By Lemma~\ref{lemma_specialsequence}, there exists a constant $c_0>0$ and positive sequences $(\delta_n)_{n \geq 1}$ and $(b_n)_{n \geq 1} \subset [0,1/2]$, both converging to zero, satisfying the following:
\begin{align}
& \delta_{n+1} \leq \frac{\delta_n}{3} \text{ for all } n \in \N, \label{e_sequence1}
\\& \bP^\Pi(N(\delta_n /2, 2 b_n) \geq c_0 \delta_n^{-1}) \geq \frac 1 4, \label{e_sequence2}
\\& b_n v(\delta_{n+1}) \text{ is increasing and } b_n v(\delta_{n+1}) \geq n \text{ for all } n \in \N. \label{e_sequence3}
\end{align}
Without loss of generality we assume that $\delta_1 < t$.

Let $\epsilon >0$. For $n \in \N$ we define the event $I_n$ by
\begin{equation}
I_n := \{Z_{t-\delta_{n+1}}(B_\eps) \geq b_n /2\}. \nonumber
\end{equation}
First, we estimate the probability of $I_{n}$ conditional on $\cF_{t-\delta_n}$. Let $\mu_n := Z_{t-\delta_n}$ under $\bP^Z_\mu$. By the Markov property,
\begin{equation}
\bP^Z_\mu(I_{n} \, | \, \cF_{t-\delta_n}) = \bP^Z_{\mu_n} ( Z_{\delta_n - \delta_{n+1}}(B_\eps) \geq b_n/2). \nonumber
\end{equation}
We let $\eps' \in (0,\eps)$ and define
\begin{equation}
A_n := \{ Z_{\delta_n - \delta_{n+1}}(B_\eps) \geq b_n \cdot p(\delta_n / 2, \eps - \eps') \} \nonumber.
\end{equation}
By the definition of $p(\delta_n/2,\eps-\eps')$ (see \eqref{def_pdeltaeps}) and the continuity in probability of L\'evy processes, $\lim_{s \to 0^+} p(s,r) = 1$ for any $r>0$. Hence there exists $n_0(\eps- \eps') \in \N$ such that
\begin{equation} \label{e_pf_locallevy}
n \geq n_0(\eps-\eps') \Rightarrow p(\delta_n/2,\eps-\eps') \geq \frac 1 2.
\end{equation}
This implies that
\begin{equation} \label{e_pf_InAn}
\bP^Z_\mu (I_{n} \, | \, \cF_{t-\delta_n}) \geq \bP^Z_{\mu_n} (A_n)  \,\,\text{ for all } n \geq n_0(\eps - \eps').
\end{equation}
We now apply Lemma~\ref{lemma_masslwrbd} to estimate $\bP^Z_{\mu_n}(A_n)$. In the notation of that lemma we take $t_0 = \delta_n - \delta_{n+1}$, $\delta = \delta_n / 2$ and $b = b_n$. Observe that in this case, $t_0 - \delta = \delta_n - \delta_{n+1} - \delta_n /2 = \delta_n / 2 - \delta_{n+1} > 0$ by \eqref{e_sequence1}. We define
\begin{equation}
m_n = T_{\delta_n/2 - \delta_{n+1}} \mu_n (B_{\eps'}). \nonumber
\end{equation}
Finally, we observe that $B_\eps = (B_{\eps'})_{\eps - \eps'}$. Applying Lemma~\ref{lemma_masslwrbd} with these parameters, we obtain
\begin{equation} \label{e_pf_smallmeanbd1}
\bP^Z_{\mu_n}( A_n ) \geq \frac{\E^\Pi(1 - e^{-\theta_n})}{8(\sigma + 1)} \left[ 1 \wedge m_n (\delta_n / 2 - \delta_{n+1} )^{-1} \right],
\end{equation}
where $\theta_n = m_n + \sigma (\delta_n / 2 - \delta_{n+1}) p(\delta_n / 2,\eps - \eps') N_{\delta_n/2}(2b_n)$ and we recall that $N_{\delta_n/2}(2b_n)$ denotes the number of blocks in the $\Lambda$-coalescent at time $\delta_n / 2$ with at asymptotic frequency at least $2b_n$. By \eqref{e_sequence1} and \eqref{e_pf_locallevy}, if $n\geq n_0(\eps-\eps')$ then
\begin{align}
\theta_n &= m_n+ \sigma (\delta_n / 2 - \delta_{n+1}) p(\delta_n / 2,\eps - \eps') N_{\delta_n/2}(2b_n) \nonumber
\\ &\geq \frac{\sigma \delta_n}{12} N_{\delta_n/2}(2b_n). \nonumber
\end{align}
If $N_{\delta_n/2}(2b_n) \geq c_0 \delta_n^{-1}$, $\theta_n$ is bounded below by a constant (and hence so is $1 - e^{-\theta_n}$). Thus it follows from \eqref{e_sequence2} that $\E^\Pi(1 - e^{-\theta_n}) \geq c_1 > 0$ for some $c_1 > 0$, for all $n \geq n_0(\eps - \eps')$. Combined with \eqref{e_pf_smallmeanbd1}, this implies that
\begin{equation}
\bP^Z_{\mu_n}( A_n) \geq c_2 \left[ 1 \wedge m_n (\delta_n / 2 - \delta_{n+1} )^{-1} \right] \nonumber
\end{equation}
for a constant $c_2 > 0$, for all $n \geq n_0(\eps- \eps')$. Returning to \eqref{e_pf_InAn}, recalling that $\mu_n = Z_{t-\delta_n}$, and writing $m_n$ in terms of its definition, we have shown that for sufficiently large $n$,
\begin{equation} \label{e_pf_condbound1}
\bP^Z_\mu(I_{n} \, | \, \cF_{t-\delta_n}) \geq c_2 \left[ 1 \wedge \frac{T_{\delta_n/2 - \delta_{n+1}} Z_{t - \delta_n} (B_{\eps'})}{\delta_n / 2 - \delta_{n+1}} \right].
\end{equation}
Observe that the main term in the above is the quantity from Lemma~\ref{lemma_liminfgenerator} with $\eps_n = \delta_n/2 - \delta_{n+1}$. By Lemma~\ref{lemma_liminfgenerator},
\begin{equation}
\liminf_{n \to \infty} (\delta_n / 2 - \delta_{n+1} )^{-1} T_{\delta_n/2 - \delta_{n+1}} Z_{t - \delta_n} (B_{\eps'}) \geq \nu * Z_t(B_{\epsilon'}) + \infty \cdot Z_t (B_{\epsilon'}). \nonumber
\end{equation}
Taking only the first term in the above, \eqref{e_pf_condbound1} implies that
\begin{equation}
\liminf_{ n \to \infty} \bP^Z_\mu(I_{n} \, | \, \cF_{t-\delta_n}) \geq c_2 \left[ 1 \wedge \nu * Z_t(B_{\epsilon'}) \right]. \nonumber
\end{equation}	
As this holds for all $\eps' \in (0,\eps)$ and $\lim_{\eps' \uparrow \eps} Z_t(B_{\eps'}) = Z_t(B_\eps)$ (by inner regularity of the Borel measure $Z_t$), taking $\eps' \uparrow \eps$ implies that
\begin{equation} \label{e_pf_condbound2}
\liminf_{ n \to \infty} \bP^Z_\mu(I_{n} \, | \, \cF_{t-\delta_n}) \geq c_2 \left[ 1 \wedge \nu * Z_t(B_{\epsilon}) \right].
\end{equation}
If $\nu * Z_t(B_{\epsilon}) > 0$, the right hand side above is positive and bounded below. The extended (second) Borel-Cantelli lemma for conditionally independent events then implies that $I_{n}$ occurs infinitely often. That is,
\begin{equation}
\text{$I_n$ occurs infinitely often almost surely on $\{\nu * Z_t(B_\eps) > 0\}$.} \nonumber
\end{equation}

We now specialize our argument to open balls and complete the proof. Let $x_0 \in \R^d$ and $r>0$. If we take $B = \{x_0\}$ and $\eps = r' \in (0,r)$,
\begin{equation} \label{e_pf_ballmassio}
\text{$I_n(x_0,r')$ occurs infinitely often almost surely on $\{\nu * Z_t(B(x_0,r')) > 0\}$,}
\end{equation}
where $I_n(x_0,r') = \{Z_{t-\delta_{n+1}}(B(x_0,r')) \geq b_n /2\}$. We define $n_1 = \inf \{n : I_n(x_0,r') \text{ occurs}\}$ and $n_k = \inf \{n > n_{k-1}: I_n(x_0,r') \text{ occurs}\}$ for all $k \geq 2$, with the convention that $\inf \emptyset = \infty$. Then on $\{ \nu * Z_t(B(x_0,r)) > 0\}$, $n_k < \infty$ for all $k$ almost surely. For $k\in \N$ we define the random times
\begin{equation}
\tau_k = \begin{cases} t - \delta_{n_k + 1} &\text{ if } n_k < \infty, \\
t &\text{ if } n_k = \infty.  \end{cases} \nonumber
\end{equation} 
Indeed, it is straightforward to show that $\tau_k$ is a $(\cF_s)$-stopping time. By the previous discussion, \eqref{e_pf_ballmassio} is equivalent to
\begin{equation} \label{e_stoppingtimes_as}
n_k < \infty \text{  and  } \tau_k = t - \delta_{n_k + 1} \,\, \text{ for all $k \in \N$ a.s. on $\{ \nu * Z_t(B(x_0,r')) > 0\}$.}
\end{equation}
We apply the strong Markov property at $\tau_k$ and use the lookdown representation to obtain that, when $n_k<\infty$,
\begin{equation} \label{e_pf_strongMarkovlookdown}
\bP^Z_\mu( Z_t(B(x_0,r')) > 0 \, | \, \cF_{\tau_k}) = \cL_{\tilde{\nu}_{k}}(Z_{\delta_{n_k + 1}}(B(x_0,r'))>0),
\end{equation}
where we define $\tilde{\mu}_k := Z_{\tau_k} = Z_{t - \delta_{n_k+1}}$. We estimate the right-hand side by conditioning on the ancestral coalescent. Omitting the time dependence of the blocks, we write $\Pi^{\delta_{n_k+1}}(\delta_{n_k+1}) = \{ \pi_i : i \in [N_{\delta_{n_k+1}}]\}$, and recall from Lemma~\ref{lemma_labels} that the ancestor of $\pi_i$ is the level $i$ individual. For $i \in [N_{\delta_{n_k+1}}]$, by Lemma~\ref{lemma_local_cluster_mass}, and again using the fact that all blocks have positive asymptotic frequency when $\Pi(t)$ comes down from infinity, we have
\begin{equation} \label{e_pf_goodclusterbd}
\cL_{\tilde{\mu}_k} (Z_{i,\delta_{n_k+1}}(\delta_{n_k+1}, B(x_0,r)) > 0 \, | \, \cF^\Pi_{\delta_{n_k+1}}, X(0)) \geq  \bP^W_{X_i(0)}(W_{\delta_{n_k+1}} \in B(x_0,r))/2. \nonumber
\end{equation}
If $y \in B(x_0,r-r')$, we have
\[ \bP^W_{y}(W_{\delta_{n_k+1}} \in B(x_0,r)) \geq p(\delta_{n_k+1}, r-r')\]
by spatial homogeneousness of the L\'evy process. The initial particle positions are i.i.d. with distribution $\tilde{\mu}_k$, and hence $X_i(0) \in B(x_0, r')$ with probability $\tilde{\mu}_k(B(x_0,r'))$. Hence, for each $i \in [N_{\delta_{n_k+1}}]$, 
\begin{equation}
\cL_{\tilde{\mu}_k} (Z_{i, \delta_{n_k}+1}(\delta_{n_k+1}, B(x_0,r)) > 0 \, | \, \cF^\Pi_{\delta_{n_k+1}}) \geq \frac{p(\delta_{n_k+1} ,r-r') \tilde{\mu}_k(B(x_0,r'))}{2}. \nonumber
\end{equation}
By definition, $Z(\delta_{n_k+1} , B(x_0,r')) \geq Z_{i,\delta_{n_k+1}}(\delta_{n_k+1} , B(x_0,r'))$ for each $i \in [N_{\delta_{n_k+1}}]$, and hence for the former to be positive we simply require that $Z_{i,\delta_{n_k+1}}(\delta_{n_k+1} , B(x_0,r'))$ is positive for some $i \in [N_{\delta_{n_k+1}}]$. By Lemma~\ref{lemma_indep_ancestors}, the ancestral clusters $\{Z_{i,\delta_{n_k+1}}(\delta_{n_k+1}, \cdot) : i \in [N_{\delta_{n_k+1}}]\}$ are conditionally independent given $\cF^\Pi_{\delta_{n_k+1}}$, and hence
\begin{equation}
\cL_{\tilde{\mu}_k} (Z(\delta_{n_k+1}, B(x_0,r)) > 0 \, | \, \cF^\Pi_{\delta_{n_k+1}}) \geq 1 - \left(1 - \frac{p(\delta_{n_k+1} ,r-r') \tilde{\mu}_k (B(x_0,r'))}{2} \right)^{N_{\delta_{n_k+1}}}. \nonumber
\end{equation}
By definition of $n_k$ and $\tilde{\mu}_k$, $\tilde{\mu}_k(B(x_0,r')) \geq b_{n_k} /2$ on $\{n_k < \infty\}$. Thus, for sufficiently large $k$ and $n_k < \infty$,
\begin{align}
\cL_{\tilde{\mu}_k} (Z(\delta_{n_k+1}, B(x_0,r)) > 0 \, | \, \cF^\Pi_{\delta_{n_k+1}}) &\geq 1 - \left(1 - \frac{p(\delta_{n_k+1}, r - r') b_{n_k}}{4} \right)^{N_{\delta_{n_k+1}}}   \nonumber
\\ &\geq  1 - e^{- b_{n_k} N_{\delta_{n_k+1}}/8}  \nonumber
\\ &\geq (1 - e^{- b_{n_k} v(\delta_{n_k+1})/16}) 1(N_{\delta_{n_k+1}} \geq v(\delta_{n_k+1}) / 2) \nonumber
\end{align}
The second inequality implicitly uses the fact that $p(\delta_{n+1}, r - r') \geq 1/2$ for sufficiently large $n$ (c.f. \eqref{e_pf_condbound2}). Taking the expectation of the above, we obtain
\begin{align} \label{e_pf_clustersbd}
\cL_{\tilde{\mu}_k} (Z(\delta_{n_k+1}, B(x_0,r)) > 0)  = (1 - e^{-b_{n_k} v(\delta_{n_k+1}) / 16}) \bP^\Pi(N_{\delta_{n_k+1}} \geq v(\delta_{n_k+1})/2).
\end{align}
By Theorem~\ref{theorem_blocks},
\begin{equation}
\frac{N_s}{v(s)}  \to 1 \nonumber
\end{equation}
in distribution as $s \downarrow 0$. In particular,  there exists a non-decreasing function $s \to \tilde{\eps}(s)$ such that $\lim_{s \to 0^+} \tilde{\eps}(s) = 0$ and
\begin{equation}
\sup_{u \in (0,s]}\bP^\Pi(N_u \geq v(u)/2) \geq 1 - \tilde{\eps}(s).  \nonumber
\end{equation}
By \eqref{e_pf_clustersbd}, for $n_k < \infty$ we have
\begin{align} \cL_{Z_{\tau_k}} (Z(\delta_{n_k+1}, B(x_0,r)) > 0) \geq (1 - e^{-b_{n_k} v(\delta_{n_k+1}) / 16}) (1 - \tilde{\eps}(\delta_{n_k+1})). \nonumber
\end{align}
By \eqref{e_sequence3}, we have $b_{n_k} v(\delta_{n_k +1}) \geq n_k \geq k$ on $\{n_k < \infty\}$. Combining this with the above and \eqref{e_pf_strongMarkovlookdown}, we obtain that on the event $\{n_k < \infty\}$,
\begin{align}
\bP(Z_t(B(x_0,r) > 0 \, | \, \cF_{\tau_k}) \geq (1 - e^{-k / 16}) (1 - \tilde{\eps}(\delta_{k+1})). \nonumber
\end{align}
We now compute
\begin{align}
\bP^Z_\mu( Z_t(B(x_0,r)> 0, n_k < \infty) &= \E^Z_\mu( P(Z_t(B(x_0,r) >0, n_k < \infty \, | \, \cF_{\tau_k}))  \nonumber
\\& =\E^Z_\mu( P(Z_t(B(x_0,r) >0 \, | \, \cF_{\tau_k}) 1_{\{n_k < \infty\}})  \nonumber
\\ &\geq (1 - e^{-k / 16}) (1 - \tilde{\eps}(\delta_{k+1})) \bP^Z_\mu(n_k < \infty).
\end{align}
The second line uses the fact that $\{n_k < \infty\} \in \cF_{\tau_k}$, and the third line uses the bound for the conditional probability derived above. Let $\mathcal{E} = \cap_{k=1}^\infty \{n_k < \infty\}$ and note that, since $\{n_{k+1} < \infty \} \subseteq \{n_{k} < \infty\}$, $1_\mathcal{E} = \lim_{k \to \infty} 1_{\{n_k < \infty\}}$ a.s. Taking $k \to \infty$ in the above, it follows that
\begin{equation}
\bP^Z_\mu( Z_t(B(x_0,r)> 0, \mathcal{E}) \geq \bP^Z_\mu(\mathcal{E}), \nonumber
\end{equation}
and hence the two sides are equal. By \eqref{e_stoppingtimes_as}, $\{\nu * Z_t(B(x_0,r'))>0\} \subseteq \mathcal{E}$, and therefore
\begin{equation}
\bP^Z_\mu(Z_t(B(x_0,r)) = 0, \nu * Z_t(B(x_0,r')) > 0 ) = 0. \nonumber
\end{equation}
Regularity of $\nu * Z_t$ implies that $\nu * Z_t(B(x_0,r)) > 0 $ if and only if $\nu * Z_t(B(x_0,r')) > 0 $ for some $r' < r$. We thus deduce that
\begin{equation}
\bP^Z_\mu(Z_t(B(x_0,r)) = 0, \nu * Z_t(B(x_0,r)) > 0 ) = 0. \nonumber
\end{equation}
To complete the argument, we observe that the implication holds simultaneously for the countable collection of open balls with rational centres and radii almost surely. In particular, the event that there exists $x_0 \in \Q^d$ and $r \in \Q^+$ such that $\nu*Z_t(B(x_0,r))>0$ but $Z_t(B(x_0,r)) = 0$ has probability zero. On the complement of this event, an elementary topological argument yields that $S(\nu * Z_t) \subseteq S(Z_t)$. Hence
$S(\nu * Z_t) \subseteq S(Z_t)$ almost surely. Iterating the previous procedure, it follows that for $k \in \N$, the same is true of the $k$-fold convolution of $\nu$ with $Z_t$, and the proof is complete.
\end{proof}

We conclude by remarking on the case that $\Pi(t)$ has dust, i.e. when a positive proportion of the integers belong to singleton blocks in $\Pi(t)$. This case is much easier and we just sketch the proof. First, we claim that without loss of generality we may assume the mutation process $W_t$ is a pure jump L\'{e}vy process. If $W_t$ has a Brownian component, then the argument of Birkner and Blath \cite{BB09b} when $\Pi(t)$ does not come down from infinity applies, and it follows that $S(Z_t) = \R^d$ almost surely. If $W_t$ has a drift, then there is a driftless L\'evy process $\hat{W}_t$ and $a \in \R^d$ such that $W_t = at + \hat{W}_t$. One can construct coupled Fleming-Viot processes associated to mutation processes $W_t$ and $\hat{W}_t$, denoted $Z_t$ and $\hat{Z}_t$ with the obvious correspondence, such that $
Z_t = \hat{Z}_t + at$
for all $t>0$, where $\hat{Z}_t + at$ is the measure $\hat{Z}_t$ shifted by $at \in \R^d$. Thus $S(Z_t) = S(\hat{Z}_t + at)$, and it follows that if $S(\nu * \hat{Z}_t ) \subseteq S(\hat{Z}_t)$, then $S(\nu * Z_t) \subseteq S(Z_t)$, so it suffices to consider $\hat{Z}_t$. 

Let $Z_0 = \mu \in \cM_1$. Suppose that $\Pi(t)$ has dust and assume in addition that $\Lambda(\{1\}) = 0$. Under this additional assumption, it is not hard to see that for all $t>0$ there exists $\alpha(t) \in (0,1]$ such that a subset of the integers with frequency $\alpha(t)$ belong to singleton blocks. The individuals in singleton blocks evolve conditionally as independent copies of $W_t$ and exchangeability implies that their initial positions are uniformly distributed from $\mu$. The law of large numbers then implies that $Z_t$ has a component which is the heat flow associated to $W_t$, started from initial measure $\alpha(t) \mu$. In particular, it follows that $S(T_t \mu) \subseteq S(Z_t)$. On the other hand, it is clear from the lookdown construction that $S(Z_t) \subseteq S(T_t \mu)$. Thus we must have $S(Z_t) = S(T_t \mu)$. The claim then follows from the observation that, for any measure $\mu$, $S(T_t \mu) = S(\nu * T_t \mu)$, which is a consequence of $W_t$ being a pure jump process with L\'evy measure $\nu$.

Now suppose that $\Lambda(\{1\}) = c >0 $. Then there is a rate $c$ Poisson process with jump times $0<t_1 < t_2 < \cdots$ such that at time $t_k$, $Z_t$ jumps to $\delta_{x_k}$, where $x_k$ is a sample from $Z_{t_k-}$. Thus with probability one there is some $k^* \in \{0\} \cup \N$ such that $t_{k^*} < t < t_{k^* + 1}$ (we define $t_0 = 0$). The same reasoning as above then implies $S(Z_t) = S(T_{t - t_{k^*}} \delta_{x_k})$, and the result follows from the same argument.


\begin{thebibliography}{99}


\bibitem{Ald85}	
\textsc{Aldous, D.J.}, Changeability and related topics. 	\textit{\'Ecole d'\'et\'e de probabilit\'es de Saint-Flour, XIII }, Lecture Notes in Math. \textbf{1117}, 1985.
	
\bibitem{BBL10}
\textsc{Berestycki, J., Berestycki, N., and Limic, V.}, The $\Lambda$-coalescent speed of coming down from infinity.
\textit{Ann. Probab.}  	\textbf{38}, (2010)  207-233.

\bibitem{BBL14}
 \textsc{Berestycki, J., Berestycki, N., and Limic, V.}, A small-time coupling
between $\Lambda$-coalescents and branching processes. \textit{Ann. Probab.}  	\textbf{24}, (2014)  449-475.

\bibitem{Berestycki_notes}	
\textsc{Berestycki, N.}, Recent progress	in coalescent theory. arXiv:0909.3985v1.
	
\bibitem{Bertoin96}	\textsc{Bertoin, J.}, L\'evy processes.  Cambridge University Press, Cambridge, 1996.

\bibitem{Bertoin_coag} \textsc{Bertoin, J.}, Random Fragmentation and Coagulation Processes. Cambridge University Press, Cambridge, 2006.


\bibitem{BLG2003} \textsc{Bertoin, J. and Le Gall, J.F.}, Stochastic flows associated to coalescent processes, \textit{Prob. Theory Relat. Fields} \textbf{126}, (2003) pp. 261-288.


\bibitem{BLG2005} \textsc{Bertoin, J. and Le Gall, J.F.},
Stochastic flows associated to coalescent processes III: Stochastic differential equations, \textit{Ann. Inst. Henri Poincar\'e Probabilit\'es et Statistiques} \textbf{41}, (2005) 307-333.


\bibitem{BG2006} \textsc{Bertoin, J. and Le Gall, J.F.}, Stochastic flows associated to coalescent processes III: Limit theorems, \textit{Illinois J. Math.} \textbf{50}, (2006) pp. 147-181.


	
\bibitem{BBCEMSW05}	
\textsc{Birkner, M., Blath, J., Capaldo, M., Etheridge, A.,  M\"{o}hle, M., Schweinsberg, J., and Wakolbinger, A.},
 $\alpha$-stable branching and $\beta$-coalescents, \textit{Electron. J. Probab.} \textbf{10}, Paper no. 9, 303–325, (2005).	
	
\bibitem{BB09a}
\textsc{Birkner, M. and Blath, J.}, Measure-valued diffusions, general coalescents and population genetic inference, in: Trends in Stochastic Analysis, LMS 353, Cambridge University Press, 329–363, 2009.

\bibitem{BB09b}
\textsc{Birkner, M. and Blath, J.}, Generalised stable Fleming-Viot processes as flickering random measures. \textit{Electron. J. Probab. } \textbf{84} 2418-2437, 2009.


\bibitem{BBMST}
\textsc{Birkner, M.  Blath, J., M$\ddot{\text{o}}$hle, M.,   Steinr$\ddot{\text{u}}$cken, M. and  Tams, J.}, A modified lookdown
construction for the Xi-Fleming-Viot with mutation and populations with recurrent
bottlenecks process. \textit{ALEA}
\textbf{6}, (2009) 25–61.


\bibitem{Blath09}
\textsc{Blath, J.}, Measure-valued processes, self-similarity and flickering random measures. In Fractal Geometry and Stochastics IV. Progr. Probab.
\textbf{61} 175-196. Birkhauser, Basel, 2009


\bibitem{DH82}
\textsc{Dawson, D. and Hochberg, K.}, Wandering random measures in the Fleming-Viot Model.
\textit{Ann. Probab.}  \textbf{10}, (1982), 554-580.






\bibitem{DK96}
\textsc{Donnelly, P. and Kurtz, T.}, A countable representation of the Fleming-Viot
measure-valued diffusion. \textit{Ann. Probab.} \textbf{24}, (1996) 698-742 .

\bibitem{DK99}
\textsc{Donnelly, P. and Kurtz, T.}, Particle representations for measure-valued population
models. \textit{Ann. Probab.}  \textbf{27}, (1999) 166-205.

\bibitem{Eth00}
\textsc{Etheridge, A.}, An Introduction to Superprocesses.  Amer. Math. Soc., Providence, RI, 2000.


\bibitem{EM1991}
\textsc{Etheridge, A. and March, P.}, A note on superprocesses, \textit{Probab. Theory Rel. Fields} \textbf{ 89},  (1991)
141-148.


\bibitem{EK1993} \textsc{Ethier, S.N. and Kurtz, T.}, Fleming-Viot processes in population genetics, \textit{SIAM J. Control Optim.} \textbf{31}, (1993) 345-386.

\bibitem{EP1991} \textsc{Evans, S. and Perkins, E.}, An absolute continuity result for
measure-valued diffusions and applications, \textit{Trans. Amer. Math. Soc.} \textbf{325}, (1991) pp. 661-682.


\bibitem{FV1979} \textsc{Fleming, W.H. and Viot, M.}, Some measure-valued Markov processes in population genetics theory, \textit{Indian University Mathematics Journal} \textbf{ 28}, (1979) 817-843.






\bibitem{FS04}
\textsc{Fleischmann, K. and  Sturm, A.,} A super-stable motion with infinite mean branching,
\textit{Ann. Inst. H. Poincar\'e Probab. Statist.}  \textbf{40}, no. 5, (2004) 513-537.

\bibitem{H2021}
\textsc{Hughes, T.}, The density of the $(\alpha,d, \beta)$-superprocesses and singular solutions to a fractional non-linear PDE. To appear. \textit{Ann. Inst. H. Poincar\'e Probab. Statist. }  


\bibitem{Kingman}	
\textsc{Kingman, J.F.C.}, The coalescent. \textit{Stochastic Process. Appl.}	\textbf{13}, (1982) 235-248.



\bibitem{Ky14} \textsc{Kyprianou, A.E.} (2014)
Fluctuations of L\'evy Processes with Applications. Springer Berlin Heidelberg.


\bibitem{Li19} \textsc{Li, Z.} Continuous-state branching processes with immigration. arXiv:1901.03521, 2019.

\bibitem{LZ2008} \textsc{Li, Z. and Zhou, X.}, Distribution and propagation properties of superprocesses with general branching mechanisms. \textit{Comm. Stoch. Anal.} \textbf{2}, (2008) No. 3, pp. 469-477.

\bibitem{LZ2012} \textsc{Liu, H. and Zhou, X.}, The compact support property for the $\Lambda$-Fleming-Viot process with underlying Brownian motion, \textit{Electron. J. Probab.} \textbf{17}, (2012) no. 73, 1-20.



\bibitem{LZ2015} \textsc{Liu, H. and Zhou, X.}, Some support properties for a class of $\Lambda$-Fleming-Viot processes, \textit{Ann. Inst. H. Poincar\'e Probab. Statist.}  \textbf{51}, (2015) 1076-1101.



\bibitem{MM2020} \textsc{Mamin, R. and Mytnik, L.}, Absolute continuity of the super-Brownian motion with infinite mean. arXiv:2012.09040v1.

\bibitem{P1988} \textsc{Perkins, E.}, A space-time property of a class of measure-valued branching diffusions, \textit{Trans. Amer. Math. Soc.} \textbf{305}, (1989) 743-795.

\bibitem{P1989} \textsc{Perkins, E.}, The Hausdorff measure of the closed support of super-Brownian motion, \textit{Ann. Henri Poincaré B} \textbf{25}, (1989) 205-224.

\bibitem{P1990} \textsc{Perkins, E.}, Polar sets and multiple points for super-Brownian motion, \textit{Ann. Probab.} \textbf{18}, (1990) 453-491.

\bibitem{P1991}
\textsc{Perkins, E.}, Conditional Dawson-Watanabe processes and Fleming-Viot processes, Seminar in Stochastic Processes, Birkh\"{a}user, pp 142-155, 1991.


\bibitem{P2002} \textsc{Perkins, E.}, \textit{Dawson-Watanabe Superprocesses and Measure-valued
Diffusions}, In: {\it Lectures on Probability Theory and Statistics, Ecole d'Et\'e de probabilit\'es de Saint-Flour XXIX-1999}, Ed. P. Bernard, Lecture Notes in Mathematics {\bf 1781}, 132-335, Springer, Berlin, 2001.


\bibitem{P1999} \textsc{Pitman, J.}, Coalescents with multiple collisions, \textit{Ann. Probab.} \textbf{27}, (1999) 1870-1902.



\bibitem{S1999} \textsc{Sagitov, S.}, The general coalescent with asynchronous mergers of ancestral lines, \textit{J. Appl. Prob.} \textbf{36}, (1999) 1116-1125.


\bibitem{Schw2000a}
\textsc{Schweinsberg, J.}, Coalescents with simultaneous multiple collisions. \textit{Electron. J. Probab.} \textbf{5} (12), (2000) 1-50.

\bibitem{Schw2000}
\textsc{Schweinsberg, J.}, A necessary and sufficient condition for the Lambda-coalescent to come down from infinity, \textit{Electron. Comm. Probab.}
\textbf{5} (1), (2000) 1-11.


\bibitem{Schilling}
\textsc{Schilling, R.}, An Introduction to L\'evy and Feller
Processes. arXiv:1603.00251v2.

\end{thebibliography}
\end{document}